\numberwithin{equation}{subsection}
\theoremstyle{definition}
\newtheorem{theorem}[equation]{Theorem}
\newtheorem{lemma}[equation]{Lemma}
\newtheorem{corollary}[equation]{Corollary}
\newtheorem{definition}[equation]{Definition}
\newtheorem{proposition}[equation]{Proposition}
\newtheorem{remark}[equation]{Remark}
\newtheorem{assumption}[equation]{Assumption}
\renewcommand{\phi}{\varphi}
\newcommand{\D}{\mathrm{d}}
\newcommand{\ti}{\tilde}
\renewcommand{\(}{\bigl(}
\renewcommand{\)}{\bigr)\vphantom{)}}
\newcommand{\ip}[2]{\langle#1,#2\rangle}
\renewcommand{\equiv}{\;\>\Longleftrightarrow\;\>}
\newcommand{\Reg}{\operatorname{Reg}}
\newcommand{\Int}{\operatorname{Int}}
\newcommand{\Cl}{\operatorname{Cl}}
\newcommand{\Bd}{\operatorname{Bd}}
\newcommand{\One}{{1\hskip-2.5pt{\rm l}}}
\newcommand{\eps}{\varepsilon}
\newcommand{\Si}{\Sigma}
\newcommand{\Om}{\Omega}
\newcommand{\al}{\alpha}
\newcommand{\M}{\mathcal M}
\newcommand{\Ec}{\mathcal E}
\newcommand{\F}{\mathcal F}
\newcommand{\A}{\mathcal A}
\newcommand{\La}{\Lambda}
\newcommand{\Ex}{\mathbb E\,}
\newcommand{\R}{\mathbb R}
\newcommand{\cE}[2]{\mathbb{E}\mskip1.5mu\(\mskip1.5mu#1\mskip1.5mu
 \big|\mskip1.5mu#2\mskip1.5mu\)}
\newcommand{\sif}{$\sigma$\nobreakdash-field}
\newcommand{\atomless}[1]{$#1$\nobreakdash-\hspace{0pt}atomless}
\newcommand{\independent}[1]{$#1$\nobreakdash-\hspace{0pt}independent}
\newcommand{\measurable}[1]{$#1$\nobreakdash-\hspace{0pt}measurable}
\begin{document}

\title{Noise as a Boolean algebra of \sif s. II. Classicality,
  blackness, spectrum}

\author{Boris Tsirelson}

\date{}
\maketitle

\begin{abstract}
Similarly to noises, Boolean algebras of \sif s \cite{Ts} can be
black. A noise may be treated as a homomorphism from a Boolean algebra
of regular open sets to a Boolean algebra of \sif s. Spectral sets are
useful also in this framework.
\end{abstract}

\setcounter{tocdepth}{2}
\tableofcontents

\section*{Introduction}
\addcontentsline{toc}{section}{Introduction}
A noise is called \emph{black,} if its classical part is trivial (but
the whole noise is not) \cite[Def.~7a1]{Ts04}. The same definition
applies to noise-type Boolean algebras (of \sif s) introduced
in \cite{Ts}. Triviality of the classical part is treated here as
absence of non-zero square-integrable random variables $ \psi $
satisfying the following additivity condition:
\[
\psi = \cE{ \psi }{ \Ec } + \cE{ \psi }{ \Ec' }
\]
for all \sif s $ \Ec $ of the given Boolean algebra. The set of all
such $ \psi $ is the so-called first chaos space (generalizing the
first Wiener chaos space). Surprisingly, it is sufficient to check the
additivity condition only for \sif s $ \Ec $ of a Boolean subalgebra,
provided that the corresponding measure $ \Ec \mapsto \Ex
| \cE{\psi}{\Ec} |^2 $ on the subalgebra is atomless (Theorem
\ref{1a4}). This result is useful when dealing with a noise over
$ \R^2 $ that is not rotation-invariant. (Probably, the Arratia
flow leads to such noise.) Its projections to different axes may
behave quite differently, but anyway, if one of them is black then
others must be black.

The spectral theory of noises \cite[Sect.~9]{Ts04} is reformulated
here (Sect.~\ref{sec:2}) for a noise-type Boolean algebra. Instead of
spectral measures on the space of closed sets we get spectral measure
spaces. Sect.~\ref{sec:3} relates the new framework to the old one. If
$ \R^2 $ is divided in two domains by a curve, a noise over $ \R^2 $
is thus divided in two independent components if and only if almost
all spectral sets avoid the curve (Prop.~\ref{3b9}).

\section[Classicality and blackness]{\raggedright Classicality and blackness}
\label{sec:1}
\subsection{Definitions; preservation under completion}
\label{1a}

Let $ B $ be a noise-type Boolean algebra \cite[Def.~2a1]{Ts} of \sif
s on a probability space $ (\Om,\F,P) $. The corresponding
projections\footnote{%
 Throughout, ``projection'' means ``orthogonal projection''.}
$ Q_x $ \cite[Sect.~1d]{Ts} for $ x \in B $, acting on $ H =
L_2(\Om,\F,P) $, satisfy \cite[Lemma~2a2]{Ts}
\begin{equation}\label{1a1}
Q_x Q_y = Q_{x \wedge y} \, .
\end{equation}

\begin{definition}
(a) The \emph{first chaos space} $ H^{(1)} $ is a (closed linear)
subspace of $ H $ consisting of all $ \psi \in H $ such that for all $
x,y \in B $
\begin{equation}\label{1a3}
x \wedge y = 0 \quad \text{implies} \quad Q_{x\vee y} \psi = Q_x \psi
+ Q_y \psi \, .
\end{equation}
(b) $ B $ is called \emph{classical} if the first chaos space
generates the whole \sif\ $ \F $.
\newline
(c) $ B $ is called \emph{black} if the first chaos space contains
only $ 0 $.
\end{definition}

Taking $ x=y=0 $ in \eqref{1a3} we see that
\begin{equation}\label{*}
Q_0 \psi = 0 \quad \text{for all } \psi \in H^{(1)} \, .
\end{equation}

Note that $ H^{(1)} $ is the set of all $ \psi \in H $ such that $ Q_0
\psi = 0 $ and for all $ x,y \in B $
\begin{equation}\label{1a35}
Q_{x\vee y} \psi + Q_{x\wedge y} \psi = Q_x \psi + Q_y \psi \, ;
\end{equation}
for the proof, apply \eqref{1a3} twice: to $ x, y \wedge x' $ and also
to $ y \wedge x, y \wedge x' $.

Recall the noise-type completion $ C $ of $ B $ \cite{Ts}.

\begin{proposition}
If $ \psi $ satisfies \eqref{1a3} for all $ x,y \in B $ then $ \psi $
satisfies \eqref{1a3} for all $ x,y \in C $, where $ C $ is the
noise-type completion of $ B $.
\end{proposition}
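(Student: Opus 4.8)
The plan is to transfer the modular identity \eqref{1a35} from $B$ to $C$ by a continuity argument, exploiting that the completion is built from $B$ by limits under which the relevant projections converge strongly. By the remark following \eqref{1a35}, the hypothesis is equivalent to $Q_0\psi=0$ (which holds since $0\in B$; cf.\ \eqref{*}) together with
\[
Q_{x\vee y}\psi + Q_{x\wedge y}\psi = Q_x\psi + Q_y\psi \qquad (x,y\in B),
\]
and that same remark shows \eqref{1a3} on $C$ follows once this identity is known for all $x,y\in C$. So it suffices to propagate \eqref{1a35} from $B$ to $C$. Working with the symmetric form \eqref{1a35} rather than with \eqref{1a3} is what makes the argument clean: the approximants of a disjoint pair need not themselves be disjoint, and \eqref{1a35} holds for arbitrary pairs.

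First I would recall from \cite{Ts} the description of the noise-type completion: $B$ is dense in $C$ for a convergence under which $Q_{x_\alpha}\to Q_x$ strongly, and the lattice operations $(x,y)\mapsto x\vee y$ and $(x,y)\mapsto x\wedge y$ are continuous for it. In particular, for fixed $\psi$ the map $x\mapsto Q_x\psi$ from $C$ to $H$ is continuous, since strong convergence $Q_{x_\alpha}\to Q_x$ gives $Q_{x_\alpha}\psi\to Q_x\psi$.

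Granting this, fix $x,y\in C$ and choose nets $x_\alpha,y_\alpha\in B$ with $x_\alpha\to x$ and $y_\alpha\to y$. By continuity of the operations, $x_\alpha\vee y_\alpha\to x\vee y$ and $x_\alpha\wedge y_\alpha\to x\wedge y$, so all four projections applied to $\psi$ converge in $H$:
\[
Q_{x_\alpha\vee y_\alpha}\psi\to Q_{x\vee y}\psi,\quad Q_{x_\alpha\wedge y_\alpha}\psi\to Q_{x\wedge y}\psi,\quad Q_{x_\alpha}\psi\to Q_x\psi,\quad Q_{y_\alpha}\psi\to Q_y\psi.
\]
Since \eqref{1a35} holds for the pair $(x_\alpha,y_\alpha)\in B\times B$ for every $\alpha$, and addition is continuous in $H$, passing to the limit yields \eqref{1a35} for $(x,y)$. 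As $x,y\in C$ were arbitrary, the identity holds throughout $C$, which is what we wanted.

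The main obstacle is the structural input quoted from \cite{Ts}: the density of $B$ in $C$ and the continuity of $\vee,\wedge$ for the convergence carrying strong convergence of projections. If instead $C$ is constructed by iterating monotone limits, so that no single approximating net from $B$ exists but only a transfinite tower of them, the one-shot argument above is replaced by transfinite induction on the stages of the construction — the base case being the hypothesis on $B$, and each limit stage repeating the passage to the strong limit in the four terms of \eqref{1a35}, with the inductive hypothesis supplying the identity at the previous stage. Either way the only analytic content is the interchange of a strong limit with the finite sum in $H$, which is routine.
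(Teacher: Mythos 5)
Your overall strategy coincides with the paper's: reduce \eqref{1a3} to the symmetric identity \eqref{1a35} (together with $Q_0\psi=0$), and transfer \eqref{1a35} from $B$ to the completion $C$ by passing to strong limits of the projections. However, there is one unjustified step in your execution: you approximate \emph{both} variables at once and assert ``by continuity of the operations'' that $x_\alpha\vee y_\alpha\to x\vee y$ and $x_\alpha\wedge y_\alpha\to x\wedge y$. That is \emph{joint} continuity of $\vee$ and $\wedge$ as maps $C\times C\to C$, whereas what \cite{Ts} provides (and what the paper quotes, namely (2a6) and 2b6 there) is only \emph{separate} continuity: for each fixed $x\in C$, the maps $y\mapsto x\wedge y$ and $y\mapsto x\vee y$ are continuous. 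Separate continuity does not imply joint continuity, even on metrizable spaces. For $\wedge$ your step can in fact be salvaged directly, since $Q_{x_\alpha\wedge y_\alpha}=Q_{x_\alpha}Q_{y_\alpha}\to Q_xQ_y=Q_{x\wedge y}$ strongly (multiplication is jointly strongly continuous on norm-bounded sets, by \eqref{1a1}). But for $\vee$ there is no such product formula --- $Q_{x\vee y}$ is not a simple function of $Q_x,Q_y$; compare the paper's remark in Sect.~2b that the inclusion $S_x\cup S_y\subset S_{x\vee y}$ is generally strict --- and even the separate continuity of $y\mapsto x\vee y$ is a nontrivial result of \cite{Ts}, not a formality. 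As written, your one-shot limit therefore has a genuine gap.

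The repair is precisely the paper's two-step iteration, which needs only separate continuity: first fix $x\in B$ and take $y_\alpha\in B$ with $y_\alpha\to y\in C$; each of the four terms of \eqref{1a35} converges by continuity of $y\mapsto x\vee y$ and $y\mapsto x\wedge y$, extending the identity to $x\in B$, $y\in C$. Then fix $y\in C$ and let $x_\alpha\in B$, $x_\alpha\to x\in C$, and repeat using continuity in the other variable. Incidentally, your closing worry about a transfinite tower of limits is unnecessary: the topology is metrizable and $C$ lies in the closure of $B$, so sequences from $B$ suffice; the only real care needed is the one-variable-at-a-time extension just described.
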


\begin{proof}
We use \eqref{1a35} instead of \eqref{1a3}. For every $ x \in C $ the
maps $ y \mapsto x \wedge y $ and $ y \mapsto x \vee y $ are
continuous on $ C $ \cite[(2a6) and 2b6]{Ts} in the ``strong
operator'' topology \cite[Sect.~1d]{Ts}: $ x_n \to x $ in this
metrizable topology if and only if $ \forall \psi \in H \,\> \|
Q_{x_n} \psi - Q_x \psi \| \to 0 $.

Thus, \eqref{1a35} extends by continuity from the case $ x,y \in B $
to the more general case $ x \in B $, $ y \in C $. And then it extends
further to $ x,y \in C $.
\end{proof}

We see that $ H^{(1)} $ (as well as classicality and blackness) is
uniquely determined by the completion $ C $ of $ B $. Recall also that
$ C $ is uniquely determined by the closure of $ B $
\cite[Intro]{Ts}.

\subsection{Beyond the completion}
\label{1b}

A partition of unity in $ B $ consists, by definition, of $
x_1,\dots,x_n \in B $ such that $ x_1 \vee \dots \vee x_n = 1 $, $ x_i
\ne 0 $ for all $ i $, and $ x_i \wedge x_j = 0 $ whenever $ i \ne j
$.

We say that a vector $ \psi \in H^{(1)} $ is atomless, if for every $
\eps > 0 $ there exists a partition of unity $ x_1,\dots,x_n $ such
that $ \| Q_{x_i} \psi \| \le \eps $ for all $ i = 1,\dots,n $.

Assume that $ b \subset B $ is a Boolean subalgebra, and a vector $
\psi \in H $ satisfies \eqref{1a3} for all $ x,y \in b $. The notion
``\atomless{b}'' is defined as before (using partitions of unity in $
b $ rather than $ B $).

\begin{theorem}\label{1a4}
If $ b $ is a Boolean subalgebra of $ B $, $ \psi \in H $ satisfies
\eqref{1a3} for all $ x,y \in b $ and is \atomless{b}, then $ \psi
\in H^{(1)} $.
\end{theorem}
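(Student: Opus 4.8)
The plan is to study the nonnegative function $\mu(x)=\|Q_x\psi\|^2$ on $B$ and reduce the theorem to additivity of $\mu$. Since $0\in b$, taking $x=y=0$ in \eqref{1a3} gives $Q_0\psi=0$; hence for \emph{all} disjoint $x,y\in B$, not merely in $b$, \eqref{1a1} yields $\ip{Q_x\psi}{Q_y\psi}=\ip{\psi}{Q_0\psi}=0$. Thus $Q_x\psi\perp Q_y\psi$ whenever $x\wedge y=0$, $\mu$ is monotone and superadditive on $B$ (and additive on $b$), and for disjoint $x,y\in B$ one has $\|Q_{x\vee y}\psi-Q_x\psi-Q_y\psi\|^2=\mu(x\vee y)-\mu(x)-\mu(y)$. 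By the definition of $H^{(1)}$, proving $\psi\in H^{(1)}$ is exactly proving that $\mu$ is additive over all disjoint pairs in $B$.

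Next I would localize along a partition of unity $x_1,\dots,x_n$ of $b$. Iterating additivity on $b$ and using $Q_1=\mathrm{id}$ gives $\psi=\sum_i Q_{x_i}\psi$, hence $Q_t\psi=\sum_i Q_{t\wedge x_i}\psi$ for every $t\in B$. Fixing disjoint $x,y\in B$ and setting $\Phi=Q_{x\vee y}\psi-Q_x\psi-Q_y\psi$ and $\phi_i=Q_{(x\vee y)\wedge x_i}\psi-Q_{x\wedge x_i}\psi-Q_{y\wedge x_i}\psi$, one gets $\Phi=\sum_i\phi_i$. Each $\phi_i$ is mean zero and lies in the range of $Q_{x_i}$, so the $\phi_i$ are pairwise orthogonal and $\|\Phi\|^2=\sum_i\|\phi_i\|^2$; the assignment $x_i\mapsto\|\phi_i\|^2$ refines consistently and defines a finitely additive measure $D$ on $b$ with $0\le D\le\mu$, so $D$ is \atomless{b}. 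One checks $\|\phi_i\|^2\le\mu(x_i)$, but this only bounds $\|\Phi\|^2$ by $\mu(1)$: atomlessness supplies many tiny summands without forcing the total to vanish, and that gap is the real difficulty.

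To exploit the hypothesis I would pass to the tensor/independence picture of the noise-type structure \cite{Ts}. Writing $a_i=x\wedge x_i$, $c_i=y\wedge x_i$ and $r_i$ for the remainder of $x_i$, the factorization $L_2(\Ec_{x_i})=L_2(\Ec_{a_i})\otimes L_2(\Ec_{c_i})\otimes L_2(\Ec_{r_i})$ identifies $\phi_i$ with the pure interaction component of $Q_{x_i}\psi$, the part lying in $\mathring L_2(\Ec_{a_i})\otimes\mathring L_2(\Ec_{c_i})$. Here additivity on $b$ becomes decisive: refining inside $x_i$ by some $x_i=\bigvee_k y_k$ in $b$ forces $Q_{x_i}\psi$ to be excited in a single $y_k$-factor at a time, which in turn forces its interaction component to be ``block-diagonal'', $\phi_i=\sum_k\phi_{y_k}$ with $\phi_{y_k}\in\mathring L_2(\Ec_{x\wedge y_k})\otimes\mathring L_2(\Ec_{y\wedge y_k})$. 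Thus $\phi_i$ lies in the block-diagonal subspace attached to \emph{every} refinement of $x_i$ within $b$.

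The main obstacle is the final step, concluding $\phi_i=0$. Viewing $\phi_i$ as a Hilbert--Schmidt operator between the two mean-zero factors, block-diagonality with respect to all $b$-refinements says that $\phi_i$ is supported on the diagonal; because $\psi$ is \atomless{b}, the refinements can be taken with arbitrarily small mesh, and the intersection of the corresponding block-diagonal subspaces is $\{0\}$, the diagonal being null. Making this limiting argument precise---controlling how the block-diagonal subspaces shrink as the mesh tends to $0$, using the continuity of $t\mapsto Q_t$ in the strong operator topology and the factorizations of \cite{Ts}---is the delicate heart of the proof. Once $\phi_i=0$ for every $i$ we obtain $\Phi=0$, so $\mu$ is additive and $\psi\in H^{(1)}$.
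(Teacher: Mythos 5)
Your first three paragraphs are sound and in fact reconstruct much of the paper's infrastructure: the reduction of $\psi\in H^{(1)}$ to additivity of $\mu(x)=\|Q_x\psi\|^2$, the localization $\Phi=\sum_i\phi_i$ along a partition of unity in $b$, and the identification of $\phi_i$ with the component of $Q_{x_i}\psi$ in $(H_{x\wedge x_i}\ominus H_0)\otimes(H_{y\wedge x_i}\ominus H_0)$ are all correct and correspond to the tensor picture the paper sets up around Lemma \ref{1a9} and Remark \ref{1a10}. But the proof stops exactly where the theorem begins. Your final step --- ``block-diagonality with respect to all $b$-refinements plus small mesh forces $\phi_i=0$, the diagonal being null'' --- is announced rather than proven, and as stated it cannot work. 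The identity $\|\Phi\|^2=\sum_k\|\phi_{y_k}\|^2$ holds for \emph{every} refinement, so blocks of norm $\le\eps$ are perfectly compatible with a fixed nonzero $\|\Phi\|$: nothing in the block-diagonal structure alone shrinks the total, which is the very gap you correctly diagnosed in your second paragraph and which simply reappears here. There is no ambient measure space in which ``the diagonal'' is a null set: smallness of the pieces $y_k$ is measured only by $\psi$ (via $\|Q_{y_k}\psi\|\le\eps$), and transferring that smallness to the Hilbert--Schmidt kernel of $\Phi$ is precisely the content of the theorem. Note also that in the motivating application ($b$ generated by, say, vertical cuts of a noise over $\R^2$, while $x,y$ come from horizontal cuts), \emph{every} piece $y_k$ meets both $x$ and $y$ nontrivially, so every block can carry mass and nothing concentrates near a thin diagonal.

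The idea you are missing is to dualize against \emph{fixed} product test vectors instead of estimating $\|\Phi\|$ directly. By the lemma preceding \ref{1a9} it suffices to treat complementary pairs $(x,x')$, and by Lemma \ref{1a9} it then suffices to prove $\Ex(\psi\xi\eta)=0$ for all mean-zero $\xi\in H_x$, $\eta\in H_{x'}$. Writing $u_i=y_i\wedge x$, $v_i=y_i\wedge x'$, so that $Q_{y_i}=Q^{(x)}_{u_i}\otimes Q^{(x')}_{v_i}$ in $H=H_x\otimes H_{x'}$, one gets $\Ex(\psi\xi\eta)=\sum_i\ip{Q_{y_i}\psi}{(Q^{(x)}_{u_i}\xi)\otimes(Q^{(x')}_{v_i}\eta)}$, hence by Cauchy--Schwarz and the superadditivity of Corollary \ref{1a7} (which gives the Bessel-type bounds $\sum_i\|Q^{(x)}_{u_i}\xi\|^2\le\|\xi\|^2$ and $\sum_i\|Q^{(x')}_{v_i}\eta\|^2\le\|\eta\|^2$) the estimate $|\Ex(\psi\xi\eta)|\le\(\max_i\|Q_{y_i}\psi\|\)\|\xi\\|\,\|\eta\|\le\eps\|\xi\|\,\|\eta\|$, and $\eps$ is arbitrary. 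The crucial difference from your blockwise bound $\|\phi_i\|\le\eps$ is that the fixed vectors $\xi,\eta$ supply an $\ell_2$-summable weight whose total is independent of the partition, so the factor $\eps$ from atomlessness actually kills the sum; in your language, $\ip{\Phi}{\xi\otimes\eta}\to0$ along atomless partitions even though $\|\Phi\|$ does not, and weak vanishing against the total set of such products (Lemma \ref{1a9} again) suffices.
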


The proof is given after some lemmas.

Note that
\[
\ip{ Q_x \psi }{ Q_y \psi } = 0 \quad \text{whenever } \psi \in
H^{(1)} \text{ and } x \wedge y = 0 \, ,
\]
since $ \ip{ Q_x \psi }{ Q_y \psi } = \ip{ Q_y Q_x \psi }{ \psi } =
\ip{ Q_0 \psi }{ \psi } = 0 $ by \eqref{1a1} and \eqref{*}. It follows
that
\[
x \mapsto \| Q_x \psi \|^2 \quad \text{is an additive function } B \to
[0,\infty) \quad \text{for } \psi \in H^{(1)} \, .
\]

\begin{lemma}
$ Q_x + Q_y \le Q_{x\vee y} + Q_{x\wedge y} $ for all $ x,y \in B $.
\end{lemma}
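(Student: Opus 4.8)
The plan is to reduce this operator inequality to the elementary fact that a product of two commuting orthogonal projections is again an orthogonal projection, and hence a positive operator.

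First I would record that the relevant projections all commute. Indeed \eqref{1a1} gives $Q_x Q_y = Q_{x\wedge y}$, while by symmetry of $\wedge$ also $Q_y Q_x = Q_{y\wedge x} = Q_{x\wedge y}$; thus $Q_x Q_y = Q_y Q_x = Q_{x\wedge y}$ for all $x,y \in B$, and in particular the four projections $Q_x, Q_y, Q_{x\vee y}, Q_{x\wedge y}$ commute pairwise. Next I would establish the order relations $Q_x \le Q_{x\vee y}$ and $Q_y \le Q_{x\vee y}$: by the absorption law $(x\vee y)\wedge x = x$ in the Boolean algebra $B$, \eqref{1a1} yields $Q_{x\vee y} Q_x = Q_{(x\vee y)\wedge x} = Q_x$, and since these two projections commute this says that the range of $Q_x$ is contained in the range of $Q_{x\vee y}$, i.e.\ $Q_x \le Q_{x\vee y}$; likewise $Q_y \le Q_{x\vee y}$. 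Consequently both $Q_{x\vee y} - Q_x$ and $Q_{x\vee y} - Q_y$ are themselves orthogonal projections.

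The key step is then the operator identity
\[
(Q_{x\vee y} - Q_x)(Q_{x\vee y} - Q_y) = Q_{x\vee y} + Q_{x\wedge y} - Q_x - Q_y \, .
\]
Expanding the left-hand side and using $Q_{x\vee y}^2 = Q_{x\vee y}$, $Q_{x\vee y} Q_y = Q_y$, $Q_x Q_{x\vee y} = Q_x$ (from the previous paragraph together with commutativity), and $Q_x Q_y = Q_{x\wedge y}$, the four cross terms collapse to exactly the right-hand side. Finally, the left-hand side is a product of two commuting orthogonal projections, hence is an orthogonal projection, hence is a positive operator; this gives $Q_{x\vee y} + Q_{x\wedge y} - Q_x - Q_y \ge 0$, which is the asserted inequality.

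I expect the only point that genuinely needs care to be the claim that $Q_{x\vee y} - Q_x$ (and $Q_{x\vee y} - Q_y$) is a projection, since this rests on both the commutativity and the order relation $Q_x \le Q_{x\vee y}$ established at the start; once those are in hand, everything reduces to routine lattice identities and the single relation \eqref{1a1}.
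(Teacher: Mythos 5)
Your proof is correct: the commutation relations and the order relations $Q_x \le Q_{x\vee y}$, $Q_y \le Q_{x\vee y}$ do follow from \eqref{1a1} together with the absorption law exactly as you argue, the expansion of $(Q_{x\vee y}-Q_x)(Q_{x\vee y}-Q_y)$ collapses to $Q_{x\vee y}+Q_{x\wedge y}-Q_x-Q_y$ as claimed, and a product of two commuting orthogonal projections is indeed an orthogonal projection, hence positive. Your route differs from the paper's in its packaging. The paper invokes the lattice identity for commuting projections, $Q_x + Q_y = Q_x \vee Q_y + Q_x \wedge Q_y$, identifies $Q_x \wedge Q_y = Q_x Q_y = Q_{x\wedge y}$ via \eqref{1a1}, and then bounds $Q_x \vee Q_y \le Q_{x\vee y}$ using $Q_x \le Q_{x\vee y}$ and $Q_y \le Q_{x\vee y}$; you instead exhibit the deficit directly as a product of commuting projections. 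The two are close under the hood: since $Q_x \vee Q_y = Q_x + Q_y - Q_x Q_y$ for commuting projections, your factorization is precisely the statement $(Q_{x\vee y}-Q_x)(Q_{x\vee y}-Q_y) = Q_{x\vee y} - Q_x \vee Q_y$, so your single computation proves the paper's identity and its comparison step at once. What your version buys is self-containedness --- no appeal to joins and meets of projections is needed, only \eqref{1a1}, absorption, and idempotence --- together with the slightly stronger conclusion that $Q_{x\vee y}+Q_{x\wedge y}-Q_x-Q_y$ is not merely positive but itself a projection, namely onto $H_{x\vee y} \ominus (H_x + H_y)$ in the paper's notation. What the paper's version buys is the structural reading that the failure of equality is exactly the gap between $Q_x \vee Q_y$ (projection onto $Q_x H + Q_y H$) and $Q_{x\vee y}$, which resonates with the paper's later observation in Sect.~\ref{2b} that the inclusion $S_x \cup S_y \subset S_{x\vee y}$ is generally strict.
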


\begin{proof}
By \eqref{1a1}, $ Q_x $ and $ Q_y $ are commuting projections, which
implies $ Q_x + Q_y = Q_x \vee Q_y + Q_x \wedge Q_y $, where
$ Q_x \vee Q_y $ and $ Q_x \wedge Q_y $ are projections onto $ Q_x H +
Q_y H $ and $ Q_x H \cap Q_y H $ respectively. Using \eqref{1a1}
again, $ Q_x \wedge Q_y = Q_x Q_y = Q_{x\wedge y} $. It remains to
note that $ Q_x \vee Q_y \le Q_{x\vee y} $ just because $ Q_x \le
Q_{x\vee y} $ and $ Q_y \le Q_{x\vee y} $.
\end{proof}

Taking into account that $ \| Q_x \psi \|^2 = \ip{ Q_x \psi }{ \psi }
$ we get the following.

\begin{corollary}\label{1a7}
For every $ \psi \in H $ such that $ Q_0 \psi = 0 $ we have
\[
x \mapsto \| Q_x \psi \|^2 \quad \text{is a superadditive function } B
\to [0,\infty) \, ,
\]
that is, $ \| Q_x \psi \|^2 + \| Q_y \psi \|^2 \le \| Q_{x\vee y} \psi
\|^2 $ whenever $ x \wedge y = 0 $.
\end{corollary}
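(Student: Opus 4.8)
The plan is to read off Corollary \ref{1a7} directly from the preceding Lemma by passing from an operator inequality to the associated quadratic form. The Lemma gives $ Q_x + Q_y \le Q_{x\vee y} + Q_{x\wedge y} $ as an inequality between bounded self-adjoint operators, so I would pair both sides with $ \psi $: for any self-adjoint operators $ A \le B $ one has $ \ip{ A\psi }{ \psi } \le \ip{ B\psi }{ \psi } $, hence
\[
\ip{ (Q_x + Q_y)\psi }{ \psi } \le \ip{ (Q_{x\vee y} + Q_{x\wedge y})\psi }{ \psi } \, .
\]

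The next step is to rewrite each term as a squared norm. Since each $ Q_x $ is an orthogonal projection we have $ \ip{ Q_x \psi }{ \psi } = \ip{ Q_x \psi }{ Q_x \psi } = \| Q_x \psi \|^2 $, exactly the identity flagged in the sentence introducing the corollary. Applying this to all four terms turns the displayed inequality into
\[
\| Q_x \psi \|^2 + \| Q_y \psi \|^2 \le \| Q_{x\vee y} \psi \|^2 + \| Q_{x\wedge y} \psi \|^2 \, ,
\]
valid for \emph{all} $ x,y \in B $.

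Finally I would specialize to the case $ x \wedge y = 0 $. Then $ Q_{x\wedge y} = Q_0 $ by \eqref{1a1} (or directly), and the hypothesis $ Q_0 \psi = 0 $ forces $ \| Q_{x\wedge y} \psi \|^2 = \| Q_0 \psi \|^2 = 0 $. Dropping this vanishing term yields precisely
\[
\| Q_x \psi \|^2 + \| Q_y \psi \|^2 \le \| Q_{x\vee y} \psi \|^2 \quad \text{whenever } x \wedge y = 0 \, ,
\]
which is the asserted superadditivity; that the function $ x \mapsto \| Q_x \psi \|^2 $ takes values in $ [0,\infty) $ is automatic, being a squared norm.

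There is no real obstacle here: the only conceptual point is the routine fact that an inequality $ A \le B $ between self-adjoint operators is equivalent to the pointwise inequality of quadratic forms $ \ip{ A\psi }{ \psi } \le \ip{ B\psi }{ \psi } $, and everything else is substitution. The one thing to keep straight is that the Lemma already absorbs the $ Q_{x\wedge y} $ correction term for general $ x,y $, so that superadditivity emerges cleanly only after the hypothesis $ Q_0 \psi = 0 $ kills that term in the disjoint case.
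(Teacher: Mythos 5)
Your proof is correct and follows essentially the same route as the paper: the paper derives the corollary from the lemma $ Q_x + Q_y \le Q_{x\vee y} + Q_{x\wedge y} $ precisely by noting $ \| Q_x \psi \|^2 = \ip{ Q_x \psi }{ \psi } $ and letting the hypothesis $ Q_0 \psi = 0 $ annihilate the $ Q_{x\wedge y} $ term when $ x \wedge y = 0 $. You have merely spelled out the quadratic-form step that the paper leaves implicit, so there is nothing to correct.
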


\begin{lemma}
If $ \psi = Q_x \psi + Q_{x'} \psi $ for all $ x \in B $, then $ \psi
\in H^{(1)} $.
\end{lemma}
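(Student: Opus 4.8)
The plan is to verify the defining condition \eqref{1a3} directly: I fix $x,y \in B$ with $x \wedge y = 0$ and show that $Q_{x\vee y}\psi = Q_x\psi + Q_y\psi$, after which $\psi \in H^{(1)}$ by definition. The single idea I need is to feed the hypothesis --- which only concerns \emph{complementary} pairs $x,x'$ --- into the multiplicativity relation \eqref{1a1}. Concretely, I would apply the projection $Q_{x\vee y}$ to both sides of the assumed identity $\psi = Q_x\psi + Q_{x'}\psi$.

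By \eqref{1a1}, $Q_{x\vee y}Q_x = Q_{(x\vee y)\wedge x} = Q_x$ and $Q_{x\vee y}Q_{x'} = Q_{(x\vee y)\wedge x'}$, so that
\[
Q_{x\vee y}\psi = Q_x\psi + Q_{(x\vee y)\wedge x'}\psi \, .
\]
It then remains to simplify the meet in the Boolean algebra: by distributivity $(x\vee y)\wedge x' = (x\wedge x')\vee(y\wedge x') = y\wedge x'$, and since $x\wedge y = 0$ forces $y \le x'$ we obtain $y \wedge x' = y$. Hence $Q_{x\vee y}\psi = Q_x\psi + Q_y\psi$, which is exactly \eqref{1a3}. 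Note that the degenerate case $x = y = 0$ of this very computation already yields $Q_0\psi = 0$, recovering \eqref{*}, so no separate treatment of that point is needed.

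I expect the only real obstacle to be resisting the tempting but circular route through the function $x \mapsto \|Q_x\psi\|^2$. That function is superadditive by Corollary~\ref{1a7} and, by the hypothesis, satisfies $\|Q_x\psi\|^2 + \|Q_{x'}\psi\|^2 = \|\psi\|^2$; yet these two facts alone do \emph{not} force additivity (on a three-atom partition of unity one can meet both constraints with a strict deficit), so that approach stalls. The point is that the vector identity, not merely its norm, must be combined with \eqref{1a1}; once this is seen, the computation above is immediate and sidesteps partitions of unity altogether.
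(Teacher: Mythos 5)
Your proof is correct and takes essentially the same route as the paper's: both apply $Q_{x\vee y}$ to the hypothesis $\psi = Q_x\psi + Q_{x'}\psi$ and reduce via \eqref{1a1} to $Q_{(x\vee y)\wedge x}\psi + Q_{(x\vee y)\wedge x'}\psi = Q_x\psi + Q_y\psi$, you merely spelling out the Boolean simplification $(x\vee y)\wedge x' = y\wedge x' = y$ that the paper leaves implicit. Your side remarks (the degenerate case $x=y=0$ recovering \eqref{*}, and the observation that the norm-only superadditivity route stalls) are accurate but not needed.
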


(Here $ x' $ is the complement of $ x $ in $ B $, of course.)

\begin{sloppypar}
\begin{proof}
If $ x \wedge y = 0 $ then $ Q_{x\vee y} \psi = Q_{x\vee y} ( Q_x \psi
+ Q_{x'} \psi ) = Q_{x\vee y} Q_x \psi + Q_{x\vee y} Q_{x'} \psi =
Q_{(x\vee y)\wedge x} \psi + Q_{(x\vee y)\wedge x'} \psi = Q_x \psi +
Q_y \psi $.
\end{proof}
\end{sloppypar}

Recall the sub-\sif s $ \F_x \subset \F $ and subspaces $ H_x =
L_2(\F_x) \subset H $ for $ x \in B $ \cite[Sect.~1a]{Ts}; $ H_x = Q_x
H $.

For every $ x \in B $ the \sif s $ \F_x, \F_{x'} $ are independent
\cite[Sect.~2a]{Ts}, therefore the pointwise product $ \xi \eta $
belongs to $ H $ for all $ \xi \in H_x $, $ \eta \in H_{x'} $.

\begin{lemma}\label{1a9}
The following two conditions on $ x \in B $ and $ \psi \in H $ are
equivalent:

(a) $ \psi = Q_x \psi + Q_{x'} \psi $;

(b) $ \Ex \psi = 0 $, and $ \Ex ( \psi \xi \eta ) = 0 $ for all $ \xi
\in H_x $, $ \eta \in H_{x'} $ satisfying $ \Ex \xi = 0 $,
$ \Ex \eta = 0 $.
\end{lemma}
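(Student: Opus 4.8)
The plan is to replace the projections $Q_x$ by the conditional expectations they are, and to read both conditions off the orthogonal splitting of $H$ produced by the independent pair $\F_x,\F_{x'}$. Recall $Q_x$ is the projection onto $H_x=L_2(\F_x)$, so $Q_x\psi=\cE{\psi}{\F_x}$ and $Q_0\psi=(\Ex\psi)\One$. Put $H_x^0=\{\xi\in H_x:\Ex\xi=0\}=H_x\ominus\C\One$, and similarly $H_{x'}^0$. Independence of $\F_x,\F_{x'}$ gives $\ip{\xi}{\eta}=\Ex(\xi\bar\eta)=\Ex\xi\cdot\Ex\bar\eta=0$ for $\xi\in H_x^0$, $\eta\in H_{x'}^0$; hence $H_x^0\perp H_{x'}^0$, and both are orthogonal to $\C\One$. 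Therefore $\overline{H_x+H_{x'}}=\C\One\oplus H_x^0\oplus H_{x'}^0$, and the projection onto it is $P:=Q_x\vee Q_{x'}$; since $Q_x,Q_{x'}$ commute and $Q_xQ_{x'}=Q_{x\wedge x'}=Q_0$ by \eqref{1a1}, the computation in the lemma above gives $Q_x+Q_{x'}=P+Q_0$.

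First I reformulate (a). Applying $\Ex$ to $\psi=Q_x\psi+Q_{x'}\psi$ and using $\Ex Q_x\psi=\Ex\psi$ yields $\Ex\psi=2\Ex\psi$, so $\Ex\psi=0$, i.e. $Q_0\psi=0$. Under this, (a) becomes $\psi=(Q_x+Q_{x'})\psi=(P+Q_0)\psi=P\psi$. Thus (a) is equivalent to the conjunction $\Ex\psi=0$ and $\psi\in\C\One\oplus H_x^0\oplus H_{x'}^0$.

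Next I reformulate (b). For $\xi\in H_x$, $\eta\in H_{x'}$ independence gives $\Ex|\xi\eta|^2=\Ex|\xi|^2\,\Ex|\eta|^2<\infty$, so $\xi\eta\in H$ and $\Ex(\psi\xi\eta)=\ip{\psi}{\bar\xi\,\bar\eta}$; as $\xi,\eta$ range over the mean-zero vectors so do $\bar\xi,\bar\eta$, hence the second clause of (b) says exactly $\psi\perp K$, where $K$ is the closed linear span of $\{\xi\eta:\xi\in H_x^0,\ \eta\in H_{x'}^0\}$. The crux is the orthogonal decomposition
\[
H=\C\One\oplus H_x^0\oplus H_{x'}^0\oplus K .
\]
The four subspaces are mutually orthogonal (the same independence computation shows $K\perp\C\One$, $K\perp H_x^0$ and $K\perp H_{x'}^0$), and they span $H$ because products of bounded $\F_x$- and $\F_{x'}$-measurable functions are dense in $L_2(\F_x\vee\F_{x'})$ by independence, while for a complementary pair $x,x'$ the $\sigma$-fields $\F_x,\F_{x'}$ not only are independent but together generate $\F$. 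Granting this, $(\C\One\oplus H_x^0\oplus H_{x'}^0)^\perp=K$, so (b) reads $\Ex\psi=0$ and $\psi\in K^\perp=\C\One\oplus H_x^0\oplus H_{x'}^0$ --- precisely the reformulation of (a). Hence (a)$\;\Longleftrightarrow\;$(b).

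The main obstacle is the displayed splitting, specifically its completeness: that $K$ together with the three ``edge'' subspaces exhausts $H$. This is exactly where the density of products in $L_2$ of the join and the identity $\F_x\vee\F_{x'}=\F$ are indispensable. By contrast, (a)$\;\Longrightarrow\;$(b) uses only mutual orthogonality: if $\psi=P\psi\in\C\One\oplus H_x^0\oplus H_{x'}^0$ then $\psi\perp K$. It is the converse (b)$\;\Longrightarrow\;$(a) that genuinely forces the full four-fold decomposition.
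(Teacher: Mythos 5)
Your proof is correct and follows essentially the same route as the paper: your four-fold splitting $ H = \C\One \oplus H_x^0 \oplus H_{x'}^0 \oplus K $ is exactly the paper's decomposition $ H = H_0\otimes H_0 \oplus (H_x\ominus H_0)\otimes H_0 \oplus H_0\otimes(H_{x'}\ominus H_0) \oplus (H_x\ominus H_0)\otimes(H_{x'}\ominus H_0) $ arising from the identification $ H = H_x \otimes H_{x'} $, with your $ K $ playing the role of $ (H_x\ominus H_0)\otimes(H_{x'}\ominus H_0) $. The only cosmetic difference is that the paper imports the tensor-product structure from \cite{Ts} (proof of Prop.~1d13) and computes $ Q_x + Q_{x'} $ directly in tensor form, whereas you re-derive the completeness of the splitting from density of products $ \xi\eta $ and the fact that $ \F_x, \F_{x'} $ are independent and generate $ \F $ --- which is precisely the content of that identification.
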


(Here $ \Ex \psi = \int_{\Om} \psi \, \D P = \ip{ \psi }{ \One } $.)

The proof uses a construction important to \cite{Ts} (see first of all
\cite[proof of Prop.~1d13]{Ts}). Let $ x \in B $. Up to the natural
unitary equivalence we have $ H = H_x \otimes H_{x'} $ and $ Q_{u\vee
  v} = Q_u^{(x)} \otimes Q_v^{(x')} $ for all $ u,v \in B $ such that
$ u \le x $ and $ v \le x' $. Here $ Q_u^{(x)} : H_x \to H_x $ is the
projection onto $ H_u \subset H_x $; similarly, $ Q_v^{(x')} : H_{x'}
\to H_{x'} $ is the projection onto $ H_v \subset H_{x'} $. In
particular, $ Q_x = Q_x^{(x)} \otimes Q_0^{(x')} = \One \otimes
Q_0^{(x')} $ and $ Q_y = Q_0^{(x)} \otimes \One $.

It may be puzzling that $ H_x $ is both a subspace of $ H $ and a
tensor factor of $ H $ (which never happens in the general theory of
Hilbert spaces). Here is an explanation. All spaces $ H_x $ contain
the one-dimensional space $ H_0 $ of constant functions (on $ \Om
$). Multiplying an \measurable{\F_x} function $ \psi \in H_x $ by the
constant function $ \xi \in H_{x'} $, $ \xi(\cdot)=1 $, we get the
(puzzling) equality $ \psi \otimes \xi = \psi $.

\begin{proof}[Proof of Lemma \ref{1a9}]
Treating $ H $ as $ H_x \otimes H_{x'} $ we have $ H = \( (H_x
\ominus H_0) \oplus H_0 \) \otimes \( (H_{x'} \ominus H_0) \oplus H_0
\) = (H_x \ominus H_0) \otimes (H_{x'} \ominus H_0) \oplus (H_x
\ominus H_0) \otimes H_0 \oplus H_0 \otimes (H_{x'} \ominus H_0)
\oplus H_0 \otimes H_0 $; here $ H_x \ominus H_0 $ is the orthogonal
complement of $ H_0 $ in $ H_x $ (it consists of all zero-mean
functions of $ H_x $). In this notation $ Q_x + Q_{x'} $ becomes $
\One \otimes Q_0^{(x')} + Q_0^{(x)} \otimes \One = \( (\One-Q_0^{(x)})
+ Q_0^{(x)} \) \otimes Q_0^{(x')} + Q_0^{(x)} \otimes \(
(\One-Q_0^{(x')}) + Q_0^{(x')} \) = (\One-Q_0^{(x)}) \otimes
Q_0^{(x')} + Q_0^{(x)} \otimes (\One-Q_0^{(x')}) + 2 Q_0^{(x)} \otimes
Q_0^{(x')} $, the projection onto $ (H_x \ominus H_0) \otimes H_0
\oplus H_0 \otimes (H_{x'} \ominus H_0) $ plus twice the projection
onto $ H_0 \otimes H_0 $ ($=H_0$). Thus, the equality $ \psi =
(Q_x + Q_{x'}) \psi $ (Item (a)) becomes $ \psi \in (H_x \ominus H_0)
\otimes H_0 \oplus H_0 \otimes (H_{x'} \ominus H_0) $, or
equivalently, orthogonality of $ \psi $ to $ H_0 $ and $ (H_x
\ominus H_0) \otimes (H_{x'} \ominus H_0) $, which is Item (b).
\end{proof}

\begin{remark}\label{1a10}
The proof given above shows also that
\[
\{ \psi : \psi = Q_x \psi + Q_{x'} \psi \} = ( H_x \ominus H_0 )
\oplus ( H_{x'} \ominus H_0 )
\]
for all $ x \in B $.
\end{remark}

\begin{proof}[Proof of Theorem \ref{1a4}]
Let $ x \in B $; we have to prove that $ \psi = Q_x \psi + Q_{x'} \psi
$. Let $ \xi \in H_x \ominus H_0 $, $ \eta \in H_{x'} \ominus H_0 $;
by Lemma \ref{1a9} it is sufficient to prove that $ \Ex ( \psi \xi
\eta ) = 0 $.

Given $ \eps > 0 $, we take a partition of unity $ y_1,\dots,y_n $ in
$ b $ such that $ \| Q_{y_i} \psi \| \le \eps $ for all $ i $. We have
$ \psi = \sum_i Q_{y_i} \psi $ (by \eqref{1a3} for $ b $), thus, $ \Ex
( \psi \xi \eta ) = \sum_i \Ex \( (Q_{y_i} \psi) \xi \eta \) $.
Further, $ \Ex \( (Q_{y_i} \psi) \xi \eta \) = \ip{ Q_{y_i} \psi }{
\xi \otimes \eta } = \ip{ Q_{y_i} \psi }{ Q_{y_i} (\xi \otimes \eta) }
= \ip{ Q_{y_i} \psi }{ (Q^{(x)}_{u_i} \otimes Q^{(x')}_{v_i}) (\xi
\otimes \eta) } = \ip{ Q_{y_i} \psi }{ (Q^{(x)}_{u_i} \xi) \otimes
(Q^{(x')}_{v_i}) \eta) } $, where $ u_i = y_i \wedge x $ and $ v_i =
y_i \wedge x' $; it follows that $ | \Ex ( \psi \xi \eta ) | \le
\sum_i \| Q_{y_i} \psi \| \cdot \| Q^{(x)}_{u_i} \xi \| \cdot \|
Q^{(x')}_{v_i} \eta \| $. By additivity, $ \sum_i \| Q_{y_i} \psi
\|^2 = \| \psi \|^2 $. By superadditivity (Corollary \ref{1a7}), $
\sum_i \| Q^{(x)}_{u_i} \xi \|^2 \le \| \xi \|^2 $ and $ \sum_i \|
Q^{(x')}_{v_i} \eta \|^2 \le \| \eta \|^2 $. We get $ | \Ex ( \psi
\xi \eta ) | \le \( \max_i \| Q_{y_i} \psi \| \) \( \sum_i \|
Q^{(x)}_{u_i} \xi \| \cdot \| Q^{(x')}_{v_i} \eta \| \) \le \eps
\|\xi\| \|\eta\| $ for all $ \eps $.
\end{proof}

\section[Spectrum]{\raggedright Spectrum}
\label{sec:2}
\subsection{Preliminaries: commutative von Neumann algebras and
 measure class spaces}
\label{2a}

Every commutative von Neumann algebra $ \A $ of operators on a
separable Hilbert space $ H $ is isomorphic to the algebra $ L_\infty
(S,\Si,\mu) $ on some measure space $ (S,\Si,\mu) $
(\cite[Sect.~1.7.3]{Di}, \cite[Th.~1.22]{Ta}). Here and henceforth all
measures are positive, finite and such that the corresponding $ L_2 $
spaces are separable. The measure $ \mu $ may be replaced with any
equivalent (that is, mutually absolutely continuous) measure $ \mu_1
$. Thus we may turn to a measure class space (see
\cite[Sect.~14.4]{Ar}) $ (S,\Si,\M) $ where $ \M $ is an equivalence
class of measures, and write $ L_\infty(S,\Si,\M) $; we have an
isomorphism $ \al : \A \to L_\infty(S,\Si,\M) $ of von Neumann
algebras. (See \cite[14.4]{Ar} for the Hilbert space $ L_2(S,\Si,\M) $
on which $ L_\infty(S,\Si,\M) $ acts by multiplication.)

Let $ \Si_1 \subset \Si $ be a sub-\sif. Restrictions $ \mu|_{\Si_1} $
of measures $ \mu \in \M $ are mutually equivalent; denoting their
equivalence class by $ \M|_{\Si_1} $ we get a measure class space $
(S,\Si_1,\M|_{\Si_1}) $. Clearly, $ L_\infty(S,\Si_1,\M|_{\Si_1})
\subset L_\infty (S,\Si,\mu) $ or, in shorter notation, $
L_\infty(\Si_1) \subset L_\infty(\Si) $. We have $ L_\infty(\Si_1) =
\al(\A_1) $ where $ \A_1 = \al^{-1} ( L_\infty(\Si_1) ) \subset \A $
is a von Neumann algebra. And conversely, if $ \A_1 \subset \A $ is a
von Neumann algebra then $ \al(\A_1) = L_\infty(\Si_1) $ for some
sub-\sif\ $ \Si_1 \subset \Si $ (which follows easily from
\cite{Ba}).

Given two von Neumann algebras $ \A_1, \A_2 \subset \A $, we denote by
$ \A_1 \vee \A_2 $ the von Neumann algebra generated by $ \A_1, \A_2
$. Similarly, for two \sif s $ \Si_1, \Si_2 \subset \Si $ we denote by
$ \Si_1 \vee \Si_2 $ the \sif\ generated by $ \Si_1, \Si_2 $.

\begin{lemma}\label{2a13}
$ L_\infty(\Si_1) \vee L_\infty(\Si_2) = L_\infty(\Si_1 \vee \Si_2) $.
\end{lemma}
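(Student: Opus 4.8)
The plan is to establish the two inclusions separately, the substantial one being $L_\infty(\Si_1 \vee \Si_2) \subset L_\infty(\Si_1) \vee L_\infty(\Si_2)$. The opposite inclusion is immediate: since $\Si_i \subset \Si_1 \vee \Si_2$ we have $L_\infty(\Si_i) \subset L_\infty(\Si_1 \vee \Si_2)$ for $i=1,2$, and as the right-hand side is a von Neumann algebra it must contain the von Neumann algebra generated by $L_\infty(\Si_1)$ and $L_\infty(\Si_2)$.

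For the nontrivial inclusion I would work with the multiplication realization on $H = L_2(S,\Si,\M)$ and fix a representative $\mu \in \M$. Write $\mathcal N = L_\infty(\Si_1) \vee L_\infty(\Si_2)$. Since multiplication operators commute, $\mathcal N$ is commutative, and it consists of multiplication operators, being contained in the von Neumann algebra $L_\infty(\Si)$. The key object is the family $\mathcal R$ of all $A \in \Si$ (modulo $\mu$-null sets) whose indicator $\One_A$, as a multiplication operator, lies in $\mathcal N$; the goal is to show $\mathcal R$ is a \sif\ containing both $\Si_1$ and $\Si_2$.

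The heart of the argument is checking the closure properties of $\mathcal R$. It is closed under complementation, since $\One_{S\setminus A} = \One - \One_A \in \mathcal N$, and under finite intersection, since $\One_{A\cap B} = \One_A \One_B$ is a product of two elements of $\mathcal N$. For countable unions it suffices to treat increasing sequences $A_n \uparrow A$: then $\One_{A_n} \to \One_A$ in the strong operator topology (for each $f \in H$ one has $\One_{A_n} f \to \One_A f$ in $L_2(\mu)$ by dominated convergence, the finite measure $\mu$ making $|f|^2$ integrable), and a von Neumann algebra is strongly closed, so $\One_A \in \mathcal N$. Hence $\mathcal R$ is a \sif. Taking the second set to be $S$ shows $\Si_1 \subset \mathcal R$, and symmetrically $\Si_2 \subset \mathcal R$, so $\Si_1 \vee \Si_2 \subset \mathcal R$; that is, $\One_A \in \mathcal N$ for every $A \in \Si_1 \vee \Si_2$. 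Since each bounded $(\Si_1\vee\Si_2)$-measurable function is a uniform limit of simple functions, these indicators already generate $L_\infty(\Si_1 \vee \Si_2)$ as a norm-closed algebra, whence $L_\infty(\Si_1 \vee \Si_2) \subset \mathcal N$.

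I expect the main obstacle to be precisely this \sif\ argument for $\mathcal R$, in particular the passage to the limit for increasing unions, which hinges on strong-operator closedness of $\mathcal N$ together with the finiteness of $\mu$. A more structural alternative avoids the explicit monotone-class step: as $\mathcal N$ is a von Neumann subalgebra of the commutative algebra $L_\infty(\Si)$, the correspondence recalled above yields $\mathcal N = L_\infty(\Si_0)$ for some sub-\sif\ $\Si_0$, and the chain $L_\infty(\Si_i) \subset \mathcal N \subset L_\infty(\Si_1 \vee \Si_2)$ forces $\Si_1 \vee \Si_2 \subset \Si_0 \subset \Si_1 \vee \Si_2$ modulo null sets, hence equality.
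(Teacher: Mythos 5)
Your proposal is correct and takes essentially the same route as the paper: the paper's three terse ``without loss of generality'' reductions (to indicators $\One_X$, then to $X$ in the \emph{algebra} generated by $\Si_1,\Si_2$, then to $X = X_1 \cap X_2$ with $\One_X = \One_{X_1}\One_{X_2}$) are precisely the simple-function approximation and the monotone-class/strong-closedness steps you spell out explicitly for your family $\mathcal R$. Your closing alternative via the correspondence $\mathcal N = L_\infty(\Si_0)$ (Bahadur) is also valid in the paper's framework, but the direct argument is what the paper uses.
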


\begin{proof}
``$\subset$'' is trivial; we prove ``$\supset$''. Let $ A \in
L_\infty(\Si_1 \vee \Si_2) $; we have to prove that $ A \in
L_\infty(\Si_1) \vee L_\infty(\Si_2) $. Without loss of generality we
assume the following. First, that $ A $ is an indicator, $ A = \One_X
$, $ X \in \Si_1 \vee \Si_2 $. Second, that $ X $ belongs to the
\emph{algebra} generated by $ \Si_1, \Si_2 $. Third, that $ X =
X_1 \cap X_2 $ for some $ X_1 \in \Si_1 $, $ X_2 \in \Si_2 $. Now, $
\One_X = \One_{X_1} \One_{X_2} \in L_\infty(\Si_1) \vee
L_\infty(\Si_2) $.
\end{proof}

\begin{corollary}\label{2a15}
If $ \al(\A_1) = L_\infty(\Si_1) $ and $ \al(\A_2) = L_\infty(\Si_2) $
then $ \al(\A_1 \vee \A_2) = L_\infty(\Si_1 \vee \Si_2) $.
\end{corollary}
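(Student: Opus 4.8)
The plan is to deduce the corollary from Lemma \ref{2a13} by transporting everything through the isomorphism $\al$. The key intermediate claim I would establish is that $\al$ respects the join operation on von Neumann algebras, namely
\[
\al(\A_1 \vee \A_2) = \al(\A_1) \vee \al(\A_2) .
\]
Once this is in hand, the corollary is immediate: using the hypotheses $\al(\A_1) = L_\infty(\Si_1)$ and $\al(\A_2) = L_\infty(\Si_2)$, and then Lemma \ref{2a13},
\[
\al(\A_1 \vee \A_2) = \al(\A_1) \vee \al(\A_2) = L_\infty(\Si_1) \vee L_\infty(\Si_2) = L_\infty(\Si_1 \vee \Si_2) .
\]

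To prove the intermediate claim, I would first unwind the definition: $\A_1 \vee \A_2$ is the smallest von Neumann subalgebra of $\A$ containing $\A_1 \cup \A_2$, that is, the $\sigma$-weak closure of the $*$-subalgebra $\A_0 \subset \A$ algebraically generated by $\A_1 \cup \A_2$ (and similarly on the $L_\infty$ side, inside the acting algebra). Since $\al$ is an algebra homomorphism, $\al(\A_0)$ is precisely the $*$-subalgebra generated by $\al(\A_1) \cup \al(\A_2)$. It then remains to check that $\al$ carries the $\sigma$-weak closure of $\A_0$ onto the $\sigma$-weak closure of $\al(\A_0)$, which is exactly the asserted equality.

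The one genuine point is therefore the continuity statement: a $*$-isomorphism between von Neumann algebras is automatically normal, hence a homeomorphism for the $\sigma$-weak (ultraweak) topologies, so it sends $\sigma$-weak closures onto $\sigma$-weak closures. This is the classical fact I expect to be the crux, and I would simply invoke it (see e.g.\ \cite{Ta}). Everything else—that a homomorphism sends a generated $*$-algebra onto the generated $*$-algebra, and that a topological isomorphism sends closure onto closure—is routine bookkeeping, so after recording the normality of $\al$ the proof reduces to a one-line chain of equalities as above.
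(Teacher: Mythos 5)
Your proposal is correct and follows the paper's own route exactly: the paper likewise proves the corollary by noting that $\al(\A_1 \vee \A_2) = \al(\A_1) \vee \al(\A_2)$ because $\al$ is an isomorphism, and then invoking Lemma \ref{2a13}. The only difference is that you spell out why an isomorphism preserves joins (automatic normality of $*$-isomorphisms, hence $\sigma$-weak homeomorphism), a standard fact the paper treats as immediate.
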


\begin{proof}
$ \al(\A_1 \vee \A_2) = \al(\A_1) \vee \al(\A_2) $, since $ \al $ is
an isomorphism; use \ref{2a13}.
\end{proof}

\begin{sloppypar}
The product $ (S,\Si,\mu) = (S_1,\Si_1,\mu_1) \times (S_2,\Si_2,\mu_2)
$ of two measure spaces leads to the tensor product of commutative von
Neumann algebras, $ L_\infty(S,\Si,\mu) = L_\infty(S_1,\Si_1,\mu_1)
\otimes L_\infty(S_2,\Si_2,\mu_2) $. The same situation appears
whenever two sub-\sif s $ \Si_1, \Si_2 \subset \Si $ are independent
(that is, $ \mu(X \cap Y) = \mu(X) \mu(Y) $ for all $ X \in \Si_1 $, $
Y \in \Si_2 $), similarly to \cite[Sect.~1c]{Ts}.
\end{sloppypar}

\begin{definition}
Let $ (S,\Si,\M) $ be a measure class space. Two sub-\sif s $ \Si_1,
\Si_2 \subset \Si $ are \independent{\M}, if they are
\independent{\mu} for some $ \mu \in \M $.
\end{definition}
 
If $ \Si_1, \Si_2 $ are \independent{\M} then (up to a natural unitary
equivalence) $ L_\infty (\Si_1 \vee \Si_2) = L_\infty (\Si_1) \otimes
L_\infty (\Si_2) $ (as before, $ L_\infty (\Si_1) = L_\infty
(S,\Si_1,\M|_{\Si_1}) $ etc).

The product $ (S,\Si,\M) = (S_1,\Si_1,\M_1) \times (S_2,\Si_2,\M_2) $
of two measure class spaces is a measure class space \cite[14.4]{Ar};
namely, $ (S,\Si) = (S_1,\Si_1) \times (S_2,\Si_2) $, and $ \M $ is
the equivalence class containing $ \mu_1 \times \mu_2 $ for some
(therefore all) $ \mu_1 \in \M_1 $, $ \mu_2 \in \M_2 $. In this case $
L_\infty(S,\Si,\M) = L_\infty(S_1,\Si_1,\M_1) \otimes
L_\infty(S_2,\Si_2,\M_2) $.

Given two commutative von Neumann algebras $ \A_1 $ on $ H_1 $ and $
\A_2 $ on $ H_2 $, their tensor product $ \A = \A_1 \otimes \A_2 $ is
a von Neumann algebra on $ H = H_1 \otimes H_2 $. Given isomorphisms $
\al_1 : \A_1 \to L_\infty(S_1,\Si_1,\M_1) $ and $ \al_2 : \A_2 \to
L_\infty(S_2,\Si_2,\M_2) $, we get an isomorphism $ \al = \al_1
\otimes \al_2 : \A \to L_\infty(S,\Si,\M) $, where $ (S,\Si,\M) =
(S_1,\Si_1,\M_1) \times (S_2,\Si_2,\M_2) $; namely, $ \al ( A_1
\otimes A_2 ) = \al_1 (A_1) \otimes \al_2 (A_2) $ for $ A_1 \in \A_1
$, $ A_2 \in \A_2 $. Note that $ \al ( \A_1 \otimes \One ) = L_\infty
(\ti\Si_1) $ and $ \al ( \One \otimes \A_1 ) = L_\infty (\ti\Si_2) $,
where $ \ti\Si_1 = \{ A_1 \times S_2 : A_1 \in \Si_1 \} $ and $
\ti\Si_2 = \{ S_1 \times A_2 : A_2 \in \Si_2 \} $ are \independent{\M}
sub-\sif s of $ \Si $, and $ \ti\Si_1 \vee \ti\Si_2 = \Si $.

\begin{corollary}\label{2a2}
For every isomorphism $ \al : \A_1 \otimes \A_2 \to L_2(S,\Si,\M) $
there exist \independent{\M} $ \Si_1,\Si_2 \subset \Si $ such that $
\al (\A_1 \otimes \One) = L_\infty(\Si_1) $, $ \al (\One \otimes \A_2)
= L_\infty(\Si_2) $, and $ \Si_1 \vee \Si_2 = \Si $.
\end{corollary}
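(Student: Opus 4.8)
The plan is to reduce an \emph{arbitrary} isomorphism $ \al $ to the concrete product isomorphism constructed in the paragraph just above the corollary, exploiting the fact that an isomorphism between commutative von Neumann algebras acting on \emph{separable} Hilbert spaces is implemented by a point isomorphism of the underlying measure class spaces. Once that point map is in hand, all three assertions (the two images and the equality $ \Si_1 \vee \Si_2 = \Si $) become transport of the already-established facts for the product isomorphism, and measure-class independence transports as well because it is a property witnessed by a single measure in the class.

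Concretely, I would first fix isomorphisms $ \al_1 : \A_1 \to L_\infty(S_1,\Si_1,\M_1) $ and $ \al_2 : \A_2 \to L_\infty(S_2,\Si_2,\M_2) $ (available by the representation theorem) and form the product isomorphism $ \al_0 = \al_1 \otimes \al_2 : \A_1 \otimes \A_2 \to L_\infty(S_0,\Si_0,\M_0) $ of the preceding paragraph; for $ \al_0 $ the conclusion already holds, with \independent{\M_0} $ \ti\Si_1, \ti\Si_2 $ generating $ \Si_0 $ and $ \al_0(\A_1 \otimes \One) = L_\infty(\ti\Si_1) $, $ \al_0(\One \otimes \A_2) = L_\infty(\ti\Si_2) $. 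Then I would consider the composite $ \be = \al \circ \al_0^{-1} : L_\infty(S_0,\Si_0,\M_0) \to L_\infty(S,\Si,\M) $, an isomorphism of commutative von Neumann algebras, and realize it by a point isomorphism: since both $ L_2 $ spaces are separable, there is an isomorphism $ f : (S,\Si,\M) \to (S_0,\Si_0,\M_0) $ of measure class spaces (defined mod null sets) with $ \be(\phi) = \phi \circ f $, so that $ \be(L_\infty(\Si_0')) = L_\infty(f^{-1}(\Si_0')) $ for every sub-\sif\ $ \Si_0' \subset \Si_0 $. Finally I would set $ \Si_1 = f^{-1}(\ti\Si_1) $ and $ \Si_2 = f^{-1}(\ti\Si_2) $.

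With these definitions the three conclusions follow directly. For the images, $ \al(\A_1 \otimes \One) = \be(\al_0(\A_1 \otimes \One)) = \be(L_\infty(\ti\Si_1)) = L_\infty(\Si_1) $, and likewise $ \al(\One \otimes \A_2) = L_\infty(\Si_2) $. Since preimage commutes with $ \vee $, we get $ \Si_1 \vee \Si_2 = f^{-1}(\ti\Si_1 \vee \ti\Si_2) = f^{-1}(\Si_0) = \Si $ (alternatively this follows from Corollary \ref{2a15} applied to $ \A_1 \otimes \A_2 = (\A_1 \otimes \One) \vee (\One \otimes \A_2) $). Finally, $ \Si_1 $ and $ \Si_2 $ are \independent{\M}: choosing $ \mu_0 \in \M_0 $ that witnesses independence of $ \ti\Si_1, \ti\Si_2 $, its pullback $ \mu \in \M $ (characterized by $ \mu(f^{-1}(E)) = \mu_0(E) $) lies in $ \M $ because $ f $ is a measure-class isomorphism, and for $ A \in \ti\Si_1 $, $ B \in \ti\Si_2 $ one has $ \mu(f^{-1}A \cap f^{-1}B) = \mu_0(A \cap B) = \mu_0(A)\mu_0(B) = \mu(f^{-1}A)\mu(f^{-1}B) $.

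I expect the only genuine obstacle to be the point realization of $ \be $: a von Neumann algebra isomorphism is a priori purely algebraic and carries no measure-theoretic information, so measure-class independence cannot be transported through it directly. The separability hypothesis is precisely what rescues the argument, via the standard classification of commutative von Neumann algebras (equivalently, of measure algebras on standard spaces) up to point isomorphism of the measure class space — the same circle of ideas already used through \cite{Ba} for the subalgebra–\sif\ correspondence. Everything else is bookkeeping that the product isomorphism $ \al_0 $ has already done.
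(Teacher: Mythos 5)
Your proof is correct and follows exactly the route the paper intends: Corollary \ref{2a2} is stated without a printed proof, as an immediate consequence of the preceding paragraph's product isomorphism $ \al_1 \otimes \al_2 $, and your reduction --- composing an arbitrary $ \al $ with its inverse and transporting $ \ti\Si_1, \ti\Si_2 $ together with a witnessing measure through the resulting isomorphism $ \be $ --- is precisely the bookkeeping the paper leaves to the reader. The one concern you flag, the point realization of $ \be $, is unproblematic in this setting because the measure class spaces of \cite[Sect.~14.4]{Ar} are standard Borel, so the measure-algebra isomorphism induced by $ \be $ is realized by a point map mod $ 0 $; and even without points one could transport the witnessing measure directly through the projection lattice (an isomorphism of von Neumann algebras preserves products and countable suprema of projections, so $ E \mapsto \mu_0\(\be^{-1}(\One_E)\) $ is a countably additive measure in $ \M $ under which $ \Si_1, \Si_2 $ are independent).
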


\subsection{Spectrum as a measure class factorization}
\label{2b}

As before, $ B $ is a noise-type Boolean algebra. The corresponding
projections $ Q_x $ commute (by \eqref{1a1}), and generate a commutative
von Neumann algebra $ \A $. Sect.~\ref{2a} gives us a measure class
space $ (S,\Si,\M) $ and an isomorphism
\[
\al : \A \to L_\infty(S,\Si,\M) \, .
\]
Projections $ Q_x $ turn into indicators:
\[
\al(Q_x) = \One_{S_x} \, , \quad S_x \in \Si
\]
(of course, $ S_x $ is an equivalence class rather than a set);
\eqref{1a1} gives
\begin{equation}\label{2b2}
S_x \cap S_y = S_{x \wedge y} \, .
\end{equation}
(In contrast, the evident inclusion $ S_x \cup S_y \subset S_{x\vee y}
$ is generally strict.)
Every \measurable{\Si} set $ E \subset S $ leads to a subspace $ H(E)
\subset H $ such that
\[
\al \( \operatorname{Pr}_{H(E)} \) = \One_E \, ,
\]
where $ \operatorname{Pr}_{H(E)} $ is the projection onto $ H(E)
$. Note that
\begin{gather*}
H ( E_1 \cap E_2 ) = H(E_1) \cap H(E_2) \, , \\
H ( E_1 \uplus E_2 ) = H(E_1) \oplus H(E_2) \, , \\
H ( E_1 \cup E_2 ) = H(E_1) + H(E_2) \, , \\
H(S_x) = H_x
\end{gather*}
(the second line differs from the third line by assuming that $ E_1,
E_2 $ are disjoint and concluding that $ H(E_1), H(E_2) $ are
orthogonal); $ E \mapsto H(E) $ is a projection measure.

Every subset of $ B $ leads to a subalgebra of $ \A $, thus, to a
sub-\sif\ of $ \Si $. In particular, for every $ x \in B $ we
introduce the von Neumann algebra
\[
\A_x \subset \A \quad \text{generated by} \quad \{ Q_y : y \in B, x
\vee y = 1 \}
\]
and the \sif\ $ \Si_x \subset \Si $ such that
\[
\al(\A_x) = L_\infty(\Si_x) \, .
\]
Note that
\begin{equation}\label{2b3}
x \le y \quad \text{implies} \quad \A_x \subset \A_y \quad \text{and}
\quad \Si_x \subset \Si_y \, .
\end{equation}

The Boolean algebra $ B $ contains the least element $ 0 $; the
corresponding operator $ Q_0 $, --- the projection onto the
one-dimensional space $ H_0 $ (of constant functions on $ (\Om,\F,P)
$), --- is a \emph{minimal} projection in $ \A $. Combined with the
relation $ \al(Q_0) = \One_{S_0} $ it shows that $ S_0 $ is an atom of
$ \Si $. Similarly, $ Q_{x'} $ is a minimal projection in $ \A_x $,
and therefore
\[
S_{x'} \text{ is an atom of } \Si_x \, .
\]

\begin{proposition}
$ \Si_x \vee \Si_y = \Si_{x \vee y} $ for all $ x,y \in B $.
\end{proposition}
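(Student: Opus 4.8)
The plan is to prove $\Si_x \vee \Si_y = \Si_{x \vee y}$ by reducing it, via the isomorphism $\al$ and Corollary~\ref{2a15}, to the purely operator-algebraic statement
\[
\A_x \vee \A_y = \A_{x \vee y} \, .
\]
Indeed, since $\al(\A_x) = L_\infty(\Si_x)$ and $\al(\A_y) = L_\infty(\Si_y)$, Corollary~\ref{2a15} gives $\al(\A_x \vee \A_y) = L_\infty(\Si_x \vee \Si_y)$; on the other hand $\al(\A_{x\vee y}) = L_\infty(\Si_{x\vee y})$ by definition. So once the von Neumann algebra identity is established, the desired equality of \sif s follows because $\al$ is an isomorphism and the correspondence between sub-von-Neumann-algebras and sub-\sif s is a bijection (up to equivalence).

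The inclusion ``$\subset$'' should be the routine direction. Here I would show $\A_x \subset \A_{x\vee y}$ and $\A_y \subset \A_{x\vee y}$ directly from the generating sets: $\A_x$ is generated by $\{Q_z : z \vee x = 1\}$, and if $z \vee x = 1$ then certainly $z \vee (x \vee y) = 1$, so every such $Q_z$ already lies in the generating set of $\A_{x\vee y}$; symmetrically for $\A_y$. Since $\A_{x\vee y}$ is a von Neumann algebra containing both generating sets, it contains $\A_x \vee \A_y$.

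The hard part will be the reverse inclusion ``$\supset$'', namely showing that each generator $Q_z$ of $\A_{x\vee y}$ (with $z \vee (x\vee y) = 1$) lies in $\A_x \vee \A_y$. The natural idea is to split $z$ relative to $x$ and $y$: the hypothesis $z \vee x \vee y = 1$ should let me write $Q_z$ as a combination of operators each belonging to $\A_x$ or to $\A_y$. Concretely I would try to express $z$ using elements $z \vee y$ and $z \vee x$, noting that $(z \vee y) \vee x = 1$ forces $Q_{z \vee y} \in \A_x$ and similarly $Q_{z \vee x} \in \A_y$; the task is then to recover $Q_z$ from these via the commuting-projection calculus \eqref{1a1}, i.e.\ $Q_{z\vee y}Q_{z \vee x} = Q_{(z\vee y)\wedge(z\vee x)}$. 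By distributivity in the Boolean algebra, $(z \vee y)\wedge(z\vee x) = z \vee (y \wedge x)$, which need not equal $z$ unless $y \wedge x \le z$, so a single product does not suffice in general.

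To finish I expect to need a partition-style decomposition of the identity relative to the complementary pieces of $x$ and $y$, reducing the recovery of $Q_z$ to the disjoint blocks where the distributive identity does collapse to $z$; on each such block the product formula \eqref{1a1} then places the relevant projection in $\A_x \vee \A_y$, and summing over the (finitely many) blocks yields $Q_z \in \A_x \vee \A_y$. The main obstacle, and the step I would scrutinize most carefully, is verifying that these blocks genuinely generate $Q_z$ inside the \emph{von Neumann} algebra (so that weak closures cause no trouble) and that the Boolean manipulations respect the generating condition ``$\,\cdot\, \vee\, (\text{element}) = 1$'' at every stage.
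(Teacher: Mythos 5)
Your overall framework coincides with the paper's: reduce via Corollary~\ref{2a15} to the operator-algebraic inclusion $ \A_{x\vee y} \subset \A_x \vee \A_y $ and verify it generator by generator. But the hard step is genuinely unfinished: the ``partition-style decomposition'' you defer to is never specified, and it is also unnecessary. The paper's resolution is a one-line complement trick. From $ z \vee x \vee y = 1 $ one gets $ (x \vee y)' \le z $, hence
\[
( z \vee x' ) \wedge ( z \vee y' ) = z \vee ( x' \wedge y' ) = z \vee
( x \vee y )' = z \, .
\]
Since $ (z \vee x') \vee x = 1 $ and $ (z \vee y') \vee y = 1 $, the projections $ Q_{z\vee x'} \in \A_x $ and $ Q_{z\vee y'} \in \A_y $ are themselves generators, and \eqref{1a1} gives $ Q_z = Q_{z\vee x'} Q_{z\vee y'} \in \A_x \vee \A_y $ outright.

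The diagnosis in your third paragraph was accurate --- with your choice $ z \vee y $, $ z \vee x $ the meet is $ z \vee (x \wedge y) $, and the residual $ x \wedge y $ blocks the collapse --- but note that the hypothesis $ z \vee x \vee y = 1 $ is exactly the information you never exploited; it is what makes the \emph{complemented} pair collapse to $ z $. (Your route could in fact be patched without any partition: multiply in the single extra factor $ Q_{z \vee x' \vee y'} $, which lies in $ \A_x $ since $ (z \vee x' \vee y') \vee x = 1 $, and observe $ (z \vee (x\wedge y)) \wedge (z \vee (x\wedge y)') = z $.) Either way, only a finite product of generators is needed, so the weak-closure concerns you flag at the end never arise.
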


\begin{proof}
By \eqref{2b3}, $ \Si_x \vee \Si_y \subset \Si_{x\vee y} $. By
\ref{2a15} it is sufficient to prove that $ \A_{x\vee y} \subset \A_x
\vee \A_y $, that is, $ Q_z \in \A_x \vee \A_y $ whenever $ x \vee y
\vee z = 1 $. We have $ z = ( z \vee x' ) \wedge ( z \vee y' ) $. By
\eqref{1a1}, $ Q_z = Q_{z\vee x'} Q_{z\vee y'} \in \A_x \vee \A_y $,
since $ Q_{z\vee x'} \in \A_x $ and $ Q_{z\vee y'} \in \A_y $.
\end{proof}

\begin{proposition}
If $ x \wedge y = 0 $ then $ \Si_x, \Si_y $ are \independent{\M}.
\end{proposition}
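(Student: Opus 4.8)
The plan is to produce a tensor factorization of $H$ in which $\A_x$ and $\A_y$ act on complementary factors, and then to read off $\M$-independence from Corollary \ref{2a2}. Throughout I would use the factorization $H = H_x \otimes H_{x'}$ together with the relation $Q_{u \vee v} = Q^{(x)}_u \otimes Q^{(x')}_v$ (for $u \le x$, $v \le x'$) recalled before the proof of Lemma \ref{1a9}. Since $w = (w \wedge x) \vee (w \wedge x')$ for every $w \in B$, this gives the workhorse identity $Q_w = Q^{(x)}_{w \wedge x} \otimes Q^{(x')}_{w \wedge x'}$.

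First I would describe $\A_x$ and $\A_y$ in this factorization. The generators of $\A_x$ are the $Q_u$ with $u \vee x = 1$, i.e.\ $u \ge x'$; for such $u$ we have $u \wedge x' = x'$, so $Q_u = Q^{(x)}_{u \wedge x} \otimes \One$. Hence $\A_x = \A^{(x)} \otimes \One$ for a commutative von Neumann algebra $\A^{(x)}$ on $H_x$. The generators of $\A_y$ are the $Q_v$ with $v \ge y'$; here the hypothesis $x \wedge y = 0$ enters, forcing $x \le y'$, so $v \ge y' \ge x$ gives $v \wedge x = x$ and thus $Q_v = \One \otimes Q^{(x')}_{v \wedge x'}$. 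Hence $\A_y = \One \otimes \A_y'$ for a commutative von Neumann algebra $\A_y'$ on $H_{x'}$. So $\A_x$ lives on the first tensor factor and $\A_y$ on the second, and consequently $\A_x \vee \A_y = \A^{(x)} \otimes \A_y'$, with the canonical identification $\A_x \vee \A_y \cong \A_x \otimes \A_y$ in which $\A_x \otimes \One = \A_x$ and $\One \otimes \A_y = \A_y$.

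Then I would feed this into the spectral picture. Restricting $\al$ to $\A_x \vee \A_y$ gives, by Corollary \ref{2a15}, an isomorphism onto $L_\infty(\Si_x \vee \Si_y) = L_\infty(S, \Si_x \vee \Si_y, \M|_{\Si_x \vee \Si_y})$. Composing with the identification $\A_x \otimes \A_y \cong \A_x \vee \A_y$ yields an isomorphism $\A_x \otimes \A_y \to L_\infty(S, \Si_x \vee \Si_y, \M|_{\Si_x \vee \Si_y})$, to which Corollary \ref{2a2} applies. It produces $\M|_{\Si_x \vee \Si_y}$-independent sub-$\sigma$-fields $\Si_1, \Si_2$ with $\al(\A_x \otimes \One) = L_\infty(\Si_1)$ and $\al(\One \otimes \A_y) = L_\infty(\Si_2)$. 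But $\A_x \otimes \One = \A_x$ and $\One \otimes \A_y = \A_y$, so $L_\infty(\Si_1) = \al(\A_x) = L_\infty(\Si_x)$ and $L_\infty(\Si_2) = L_\infty(\Si_y)$, whence $\Si_1 = \Si_x$ and $\Si_2 = \Si_y$ modulo $\M$-null sets. Therefore $\Si_x, \Si_y$ are $\M|_{\Si_x \vee \Si_y}$-independent, and since independence refers only to sets of $\Si_x$ and $\Si_y$, they are $\M$-independent.

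The hard part will be the first step: checking cleanly that every generator of $\A_x$ is supported on the factor $H_x$ and every generator of $\A_y$ on $H_{x'}$. This reduces to the two Boolean identities $u \wedge x' = x'$ (for $u \ge x'$) and $v \wedge x = x$ (for $v \ge y' \ge x$), the latter being the sole place where $x \wedge y = 0$ is used. I would also want to confirm that passing from equality of von Neumann algebras $L_\infty(\Si_i) = L_\infty(\Si_\cdot)$ to equality of $\sigma$-fields modulo null sets, and from $\M|_{\Si_x \vee \Si_y}$-independence to $\M$-independence, are harmless; these are routine measure-algebra facts rather than genuine difficulties.
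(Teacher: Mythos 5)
Your proof is correct and essentially the paper's: both rest on the factorization $ H = H_x \otimes H_{x'} $ with $ Q_{u\vee v} = Q_u^{(x)} \otimes Q_v^{(x')} $, the computation showing that the generators $ Q_z $ (with $ z \vee x = 1 $, resp.\ $ z \vee y = 1 $) act on only one tensor factor, and an appeal to Corollary~\ref{2a2}. The only cosmetic difference is that the paper first reduces to the case $ y = x' $ via $ \Si_y \subset \Si_{x'} $ (by \eqref{2b3}), which makes your extra care about restricting $ \al $ to $ \A_x \vee \A_y $ and lifting $ \M|_{\Si_x\vee\Si_y} $-independence back to $ \M $-independence unnecessary, since $ \Si_x \vee \Si_{x'} = \Si $; your direct treatment of $ \A_y $ using $ v \ge y' \ge x $ is equally valid.
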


\begin{proof}
It is sufficient to prove that $ \Si_x, \Si_{x'} $ are
\independent{\M} (since $ \Si_y \subset \Si_{x'} $ by \eqref{2b3}).

As was noted before the proof of \ref{1a9}, we have (up to the natural
unitary equivalence) $ H = H_x \otimes H_{x'} $ and $ Q_{u\vee
v} = Q_u^{(x)} \otimes Q_v^{(x')} $ for all $ u,v \in B $ such that
$ u \le x $ and $ v \le x' $.

By \ref{2a2} it is sufficient to prove that all operators of $ \A_x $
are of the form $ A \otimes \One $ (for $ A : H_x \to H_x $), and all
operators of $ \A_{x'} $ are of the form $ \One \otimes B $. We prove
the former; the latter is similar.
If $ z $ satisfies $ x \vee z = 1 $ then $ z = ( z \wedge x ) \vee x'
$ and therefore $ Q_z = Q_{z\wedge x}^{(x)} \otimes Q_{x'}^{(x')} =
Q_{z\wedge x}^{(x)} \otimes \One $, as needed.
\end{proof}

\begin{remark}
The completion of $ B $ (thus, also the closure of $ B $)
determines uniquely the algebra $ \A $ (since $ \A $ is closed in the
strong operator topology) and therefore also the spectral space.
\end{remark}

\subsection{Spectral filters, spectral sets}
\label{2c}

Taking into account that every noise-type Boolean algebra contains a
dense countable Boolean subalgebra and both algebras lead to the same
spectral space, we assume here (in Sect.~\ref{2c}) that $ B $ is a
\emph{countable} noise-type Boolean algebra.

Having only countably many equivalence classes $ S_x $ we may, and
will, treat them as sets (rather than equivalence classes), satisfying
\eqref{2b2} exactly (rather than almost everywhere). Then sets
\[
\Phi_s = \{ x \in B : s \in S_x \} \quad \text{for } s \in S
\]
satisfy $ ( x,y \in \Phi_s ) \equiv ( x \wedge y \in \Phi_s ) $, which
shows that $ \Phi_s $ is either a filter in $ B $ (if $ s \notin S_0
$) or the improper filter, the whole $ B $ (if $ s \in S_0 $). This
way, points of the spectral space may be interpreted as filters on $ B
$ (``spectral filters'').

Every countable Boolean algebra $ B $ is isomorphic to the Boolean
algebra of all clopen (that is, both closed and open) subsets of a
totally disconnected compact metrizable space, so-called Stone space
of $ B $ (homeomorphic to the Cantor set, if $ B $ is atomless).
Filters on $ B $ (maybe improper) correspond bijectively to closed
subsets (maybe empty) of the Stone space. This way, points of the
spectral space may be interpreted as closed subsets of the Stone space
(``spectral sets''), and the relation $ s \in S_x $ holds if and only
if the closed set corresponding to $ s $ is contained in the clopen
set corresponding to $ x $.

\section[Digression: planar spectral sets, etc.]{\raggedright Digression:
  planar spectral sets, etc.}
\label{sec:3}
As noted in \cite[Intro]{Ts}, in the framework of a ``noise as a Boolean
algebra of \sif s'' we consider the \sif s irrespective of the
corresponding domains (in $ \R^n $ or another parameter
space). In contrast, spectral sets defined before \cite[Sect.~9]{Ts04}
for a noise over $ \R $ are compact subsets of $ \R $ (rather than a
Stone space). In this section we return to a parameter space and its
spectral subsets. The parameter space is usually $ \R^n $, but an
arbitrary topological space can be used equally well.

\subsection{Preliminaries: regular open sets}
\label{3a}

Let $ X $ be a topological space. We introduce the set
\[
\Reg(X) = \{ (G,F) : G = \Int(F), \, F = \Cl(G) \}
\]
of all pairs $ (G,F) $ of subsets of $ X $ such that $ G $ is the
interior of $ F $ and at the same time $ F $ is the closure of $ G
$. For $ r \in \Reg(X) $ we denote
\[
G = \Int(r) \, , \quad F = \Cl(r) \, ,
\]
somewhat abusing the symbols ``$\Int$'' and ``$\Cl$'', since $ r $ is
not a subset of $ X $. We introduce on $ \Reg(X) $ a partial order
\[
r \le s \equiv \Int(r) \subset \Int(s) \equiv \Cl(r) \subset \Cl(s)
\]
(the second and third relations being evidently equivalent). It
appears that $ \Reg(X) $ is a Boolean algebra, and
\begin{gather*}
\Int( r \wedge s ) = \Int(r) \cap \Int(s) \, , \\
\Cl( r \vee s ) = \Cl(r) \cup \Cl(s) \, , \\
\Int(r') = X \setminus \Cl(r) \, , \quad \Cl(r') =
 X \setminus \Int(r) \, .
\end{gather*}
Also,
\begin{gather}
\Cl( r \wedge s ) = \Cl \( \Int(r) \cap \Int(s) \) \subset \Cl(r)
 \cap \Cl(s) \, , \\ 
\Int( r \vee s ) = \Int \( \Cl(r) \cup \Cl(s) \) \supset \Int(r)
 \cup \Int(s) \, . \label{3a8}
\end{gather}

For every $ r \in \Reg(X) $ the set $ G = \Int(r) $ is equal to the
interior of its closure; such sets are called regular open. Every
regular open set $ G $ is $ \Int(r) $ for some $ r \in \Reg(X) $,
namely, $ r = (G,\Cl(G)) $. Thus, the Boolean algebra $ \Reg(X) $ is
naturally isomorphic to the Boolean algebra of all regular open
sets. The same holds for regular closed sets.

See \cite[Sect.~4]{Ha}.

\subsection{Back to a topological base}
\label{3b}

Let $ X $ be a topological space, $ A \subset \Reg(X) $ a Boolean
subalgebra, $ B $ a noise-type Boolean algebra, and $ h : A \to B $ a
homomorphism. We are interested in  a map $ F $ from S to the set of
closed subsets of $ X $ such that for every $ a \in A $,
\begin{equation}\label{3b1}
S_{h(a)} = \{ s \in S : F(s) \subset \Cl(a) \} \quad \pmod 0 \, .
\end{equation}
Here are two relevant assumptions.

\begin{assumption}\label{3b2}
There exists a countable subset $ A_0 \subset A $ such that
$ \{ \Int(a) : a \in A_0 \} $ is a (topological) base of $ X $.
\end{assumption}

\begin{assumption}\label{3b3}
For every $ a \in A $ there exist $ a_1, a_2, \dots \in A $ such that
$ a_n \le a_{n+1} $, $ \Cl(a_n) $ is compact,
$ \Cl(a_n) \subset \Int(a) $ for all $ n $, and $ h(a'_n) \downarrow
h(a') $.
\end{assumption}

\begin{lemma}
Assumption \ref{3b2} ensures uniqueness of $ F $
satisfying \eqref{3b1}.
\end{lemma}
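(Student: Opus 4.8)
The plan is to use Assumption \ref{3b2} to reduce the identity $ F_1 = F_2 $ to countably many conditions already fixed by \eqref{3b1}, together with the elementary fact that a closed set is recoverable from the basic open sets it meets. Concretely, since $ \{ \Int(a) : a \in A_0 \} $ is a base, a point $ x \in X $ lies in a closed set $ C $ if and only if every basic open set $ \Int(a) $ (with $ a \in A_0 $) containing $ x $ meets $ C $; hence two closed sets meeting exactly the same members of $ \{ \Int(a) : a \in A_0 \} $ coincide. So it will suffice to show that $ F_1(s) $ and $ F_2(s) $ meet the same basic open sets for almost every $ s $.

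First I would dualize the containment in \eqref{3b1} into an intersection condition. Using $ \Cl(a') = X \setminus \Int(a) $, the relation $ F(s) \subset \Cl(a') $ is equivalent to $ F(s) \cap \Int(a) = \emptyset $, so applying \eqref{3b1} with $ a' $ in place of $ a $ gives, for every $ a \in A $,
\[
S_{h(a')} = \{\, s \in S : F(s) \cap \Int(a) = \emptyset \,\} \pmod 0 \, .
\]
The left-hand side involves only $ h $ and the spectral space, not the choice of $ F $. Therefore, for each fixed $ a \in A_0 $, the sets $ \{ s : F_1(s) \cap \Int(a) = \emptyset \} $ and $ \{ s : F_2(s) \cap \Int(a) = \emptyset \} $ agree mod $ 0 $; denote their symmetric difference by $ N_a $, a null set.

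Since $ A_0 $ is countable, $ N = \bigcup_{a \in A_0} N_a $ is null. For every $ s \notin N $ and every $ a \in A_0 $ we then have $ F_1(s) \cap \Int(a) = \emptyset $ exactly when $ F_2(s) \cap \Int(a) = \emptyset $, that is, $ F_1(s) $ and $ F_2(s) $ meet the same basic open sets; by the characterization above, $ F_1(s) = F_2(s) $. Thus $ F_1 = F_2 $ mod $ 0 $, as asserted.

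The argument is short, and I do not expect a serious obstacle beyond bookkeeping. The one point deserving care is the topological recovery of a closed set from the base elements it intersects, and this is exactly where Assumption \ref{3b2} enters twice: the countable base among the $ \Int(a) $ makes the recovery possible, while its countability keeps the exceptional set $ N $ null. Without a countable base one could still reconstruct $ F(s) $ pointwise, but controlling the union of exceptional sets would require additional structure.
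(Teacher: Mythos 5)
Your proof is correct and is essentially the paper's own argument in dual form: the paper writes $ X \setminus F(s) = \bigcup_{a\in A_0,\, s\in S_{h(a')}} \Int(a) $, which is exactly your statement that $ F(s) $ is recovered from the basic open sets $ \Int(a) $ disjoint from it (equivalently, contained in its complement), with the same use of countability of $ A_0 $ to control the exceptional null set. Your explicit handling of the mod-$0$ bookkeeping is a detail the paper leaves implicit, but the route is the same.
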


\begin{proof}
Every open set is the union of some sets of the base. In
particular,
\begin{equation}\label{3b5}
X \setminus F(s) = \bigcup_{a\in A_0, s\in S_{h(a')}} \Int(a) \, ,
\end{equation}
since $ \Int(a) \subset X \setminus F(s) \equiv F(s) \subset
X \setminus \Int(a) \equiv F(s) \subset \Cl(a') \equiv s \in S_{h(a')}
$.
\end{proof}

\begin{theorem}
Assumptions \ref{3b2}, \ref{3b3} ensure existence of $ F $
satisfying \eqref{3b1}.
\end{theorem}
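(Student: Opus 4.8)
The plan is to take the formula forced by the uniqueness lemma as the \emph{definition} of $F$ and then verify \eqref{3b1}. Concretely, I set
\[
F(s) = X \setminus \bigcup_{a \in A_0,\, s \in S_{h(a')}} \Int(a) \, ,
\]
which is a closed subset of $X$ for every $s$ and, by \eqref{3b5}, is the only possible candidate. Since only the countably many classes $S_{h(a')}$, $a \in A_0$, enter this definition, I may (enlarging the countable family whenever finitely or countably many further elements appear) treat all the $S_x$ occurring below as honest subsets of $S$ satisfying \eqref{2b2} exactly, exactly as in Sect.~\ref{2c}. It then remains to prove the two inclusions of \eqref{3b1} for an arbitrary $a \in A$.

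The inclusion $S_{h(a)} \subset \{ s : F(s) \subset \Cl(a) \}$ holds pointwise and uses only Assumption \ref{3b2}. Indeed, let $s \in S_{h(a)}$ and $p \in \Int(a') = X \setminus \Cl(a)$. Because $\{ \Int(b) : b \in A_0 \}$ is a base, I can choose $b \in A_0$ with $p \in \Int(b) \subset \Int(a')$, that is $b \le a'$, hence $a \le b'$ and $h(a) \le h(b')$. By \eqref{2b3} this gives $S_{h(a)} \subset S_{h(b')}$, so $s \in S_{h(b')}$; thus $b$ contributes to the union defining $F(s)$ and $p \in \Int(b) \subset X \setminus F(s)$. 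As $p$ was arbitrary, $\Int(a') \subset X \setminus F(s)$, i.e.\ $F(s) \subset \Cl(a)$.

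The reverse inclusion is the crux, and this is where Assumption \ref{3b3} and compactness are needed. First I would prove the auxiliary fact: if $K \in A$ has compact closure $\Cl(K)$ and $F(s) \cap \Cl(K) = \emptyset$, then $s \in S_{h(K')}$. Given the disjointness, the definition of $F$ yields $\Cl(K) \subset \bigcup \{ \Int(c) : c \in A_0,\ s \in S_{h(c')} \}$; compactness extracts a finite subcover $\Cl(K) \subset \Int(c_1) \cup \dots \cup \Int(c_k)$ with $c_i \in A_0$ and $s \in S_{h(c_i')}$. Writing $c = c_1 \vee \dots \vee c_k \in A$, the iterated form of \eqref{3a8} gives $\Cl(K) \subset \Int(c) \subset \Cl(c)$, hence $K \le c$ and $h(c') \le h(K')$. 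Meanwhile, by \eqref{2b2} and the homomorphism property, $s \in \bigcap_i S_{h(c_i')} = S_{\bigwedge_i h(c_i')} = S_{h(c')}$, so $s \in S_{h(c')} \subset S_{h(K')}$, as claimed.

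To finish, I apply Assumption \ref{3b3} to $a'$: there are $b_n \in A$ with $\Cl(b_n)$ compact, $\Cl(b_n) \subset \Int(a')$, and $h(b_n') \downarrow h(a)$. If $F(s) \subset \Cl(a)$, then $F(s)$ is disjoint from $\Int(a') \supset \Cl(b_n)$, so the auxiliary fact gives $s \in S_{h(b_n')}$ for every $n$. Finally $h(b_n') \downarrow h(a)$ forces $Q_{h(b_n')} \downarrow Q_{h(a)}$ in the strong operator topology, whence $\bigcap_n S_{h(b_n')} = S_{h(a)} \pmod 0$; therefore $\{ s : F(s) \subset \Cl(a) \} \subset \bigcap_n S_{h(b_n')} = S_{h(a)} \pmod 0$, completing \eqref{3b1}. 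The main obstacle is precisely the auxiliary compactness lemma: passing from the merely countable union defining $F$ to a single element $c \in A$ needs both the compact exhaustion supplied by Assumption \ref{3b3} and the correct interaction of finite joins with $\Int$ via \eqref{3a8}; the measure-class bookkeeping (reducing to a countable subalgebra so that the $S_x$ are genuine sets, and deducing $\bigcap_n S_{h(b_n')} = S_{h(a)}$ mod $0$ from $h(b_n') \downarrow h(a)$) is then the only remaining routine point.
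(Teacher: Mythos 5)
Your proof is correct and follows essentially the same route as the paper's: you define $F$ by the formula \eqref{3b5} forced by uniqueness, prove the easy inclusion from the base property of $A_0$, and prove the converse via the compact exhaustion of Assumption \ref{3b3}, a finite subcover, \eqref{3a8} and \eqref{2b2}, concluding with $S_{h(b_n')} \downarrow S_{h(a)}$. The only differences are cosmetic --- you apply Assumption \ref{3b3} to $a'$ where the paper applies it to $a$ and proves \eqref{3b1} for $a'$, and you isolate the subcover step as a named auxiliary fact and spell out the mod-$0$ bookkeeping that the paper leaves implicit.
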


\begin{proof}
Assumption \ref{3b2} gives us $ A_0 $. We define $ F(\cdot) $
by \eqref{3b5} and prove \eqref{3b1}.

Let $ a \in A $ and $ s \in S_{h(a)} $; we'll prove that $
F(s) \subset \Cl(a) $. To this end it is sufficient to prove that $
X \setminus F(s) \supset \Int(a_0) $ for every $ a_0 \in A_0 $
satisfying $ \Int(a_0) \subset \Int(a') $. We note that $ a_0 \le a'
$, $ a'_0 \ge a $, $ S_{h(a'_0)} \supset S_{h(a)} $, thus $ s \in
S_{h(a'_0)} $. By \eqref{3b5}, $ X \setminus F(s) \supset \Int(a_0)
$.

Let $ a \in A $ and $ F(s) \subset \Cl(a') $; we'll prove that $ s \in
S_{h(a')} $. Assumption \ref{3b3} gives us  $ a_1, a_2, \dots $. It is
sufficient to prove that $ s \in S_{h(a'_n)} $ for every $ n $, since
$ S_{h(a'_n)} \downarrow S_{h(a')} $.

We have $ X \setminus F(s) \supset X \setminus \Cl(a') = \Int(a)
\supset \Cl(a_n) $. Using \eqref{3b5} and compactness of $ \Cl(a_n) $
we find $ b_1,\dots,b_k \in A_0 $ (dependent on $ n $, of course) such
that $ \Int(b_1) \cup \dots \cup \Int(b_k) \supset \Cl(a_n) $ and $
s \in S_{h(b'_1)} \cap \dots \cap S_{h(b'_k)} $. Introducing $ b =
b_1 \vee \dots \vee b_k $ we have $ \Int(b) \supset \Cl(a_n) $
by \eqref{3a8}, and $ s \in S_{h(b')} $ by \eqref{2b2}. Finally, $
S_{h(b')} \subset S_{h(a'_n)} $ since $ b \ge a_n $, and we get $
s \in S_{h(a'_n)} $.
\end{proof}

From now on we assume \ref{3b2} and \ref{3b3}, and consider $ F $
satisfying \eqref{3b1}.

In particular, if $ B $ is countable, $ X $ is the Stone space of $ B
$, $ A $ consists of all clopen sets, and $ h $ is the natural
isomorphism $ A \to B $, then $ F(s) $ is the spectral set in the
sense of Sect.~\ref{2c}.

In general, every monotone sequence in $ B $ converges in $ \Cl(B) $
(the closure of $ B $ in $ \La $, see \cite[Sect.~2a]{Ts}). Thus,
$ \lim_n h(a_n) $ exists in $ \Cl(B) $ for every monotone sequence $
(a_n)_n $ in $ A $.

\begin{proposition}\label{3b7}
The following two conditions on an increasing sequence $ (a_n)_n $ in
$ A $ are equivalent:

(a) $ \lim_n h(a_n) = 1 $;

(b) for almost every $ s $ there exists $ n $ such that $
F(s) \subset \Cl(a_n) $.
\end{proposition}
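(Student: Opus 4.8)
The plan is to reduce both conditions to the single statement that $ \bigcup_n S_{h(a_n)} = S \pmod 0 $ in the spectral measure class space $ (S,\Si,\M) $, translating between $ B $ and $ (S,\Si,\M) $ through the isomorphism $ \al $.

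First I would record the operator-theoretic meaning of (a). Since $ (a_n)_n $ increases and $ h $ is a monotone homomorphism, the elements $ h(a_n) \in B $ increase, so by \eqref{1a1} the commuting projections $ Q_{h(a_n)} $ form an increasing sequence; its strong limit is the projection onto the closure of $ \bigcup_n H_{h(a_n)} $ and coincides with $ Q_{\lim_n h(a_n)} $, where the limit in $ \Cl(B) $ is the one guaranteed for monotone sequences. Recalling that $ x \mapsto Q_x $ is injective and that $ x_n \to x $ in $ \Cl(B) $ precisely when $ Q_{x_n} \to Q_x $ strongly, and that $ Q_1 $ is the identity operator on $ H $ (as $ H_1 = H $), condition (a) is equivalent to $ \sup_n Q_{h(a_n)} $ being the identity operator on $ H $.

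Next I would transport this equality through $ \al $. Being an isomorphism of von Neumann algebras, $ \al $ is normal and hence preserves suprema of increasing sequences of projections; thus $ \al\(\sup_n Q_{h(a_n)}\) = \sup_n \al(Q_{h(a_n)}) = \sup_n \One_{S_{h(a_n)}} = \One_E $, where $ E = \bigcup_n S_{h(a_n)} $ (the sets $ S_{h(a_n)} $ increasing $ \pmod 0 $ by \eqref{2b2} applied to $ h(a_n) \le h(a_{n+1}) $). Consequently $ \sup_n Q_{h(a_n)} $ is the identity iff $ \One_E = \One_S $ in $ L_\infty(S,\Si,\M) $, i.e.\ iff $ E = S \pmod 0 $. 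This gives (a) $ \iff \bigcup_n S_{h(a_n)} = S \pmod 0 $.

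Finally I would match (b) to the same statement by the defining relation \eqref{3b1}: for each $ n $ we have $ \{ s : F(s) \subset \Cl(a_n) \} = S_{h(a_n)} \pmod 0 $, so ``for almost every $ s $ there exists $ n $ with $ F(s) \subset \Cl(a_n) $'' says exactly that $ \bigcup_n S_{h(a_n)} = S \pmod 0 $. Hence (b) is equivalent to the same condition, and (a) $ \iff $ (b). The main point requiring care is the interchange of $ \al $ with the countable supremum of projections (normality) together with the identification of the strong limit of the increasing $ Q_{h(a_n)} $ with $ Q_{\lim_n h(a_n)} $; both are standard, and once they are in place the remainder is bookkeeping of the $ \pmod 0 $ equivalence classes.
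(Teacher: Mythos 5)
Your proposal is correct and takes essentially the same route as the paper: the paper's proof is exactly the chain of equivalences $ h(a_n) \uparrow 1 \equiv S_{h(a_n)} \uparrow S \equiv \tilde\forall s \, \exists n \; s \in S_{h(a_n)} \equiv \tilde\forall s \, \exists n \; F(s) \subset \Cl(a_n) $, which you reproduce while spelling out the justifications (strong convergence of the increasing projections $ Q_{h(a_n)} $ to $ Q_{\lim_n h(a_n)} $, normality of $ \al $, and the mod~$0$ bookkeeping via \eqref{3b1}) that the paper leaves implicit. There are no gaps.
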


\begin{proof}
$ h(a_n) \uparrow 1 \equiv S_{h(a_n)} \uparrow S \equiv \ti\forall
s \; \exists n \; s \in S_{h(a_n)} \equiv \ti\forall s \; \exists n \;
F(s) \subset \Cl(a_n) $, where ``$ \ti\forall $'' means ``for almost
all''.
\end{proof}

\begin{corollary}
If there exist $ a_1 \le a_2 \le \dots $ such that $ \lim_n h(a_n) = 1
$ and $ \Cl(a_n) $ is compact for every $ n $, then $ F(s) $ is
compact for almost all $ s $.
\end{corollary}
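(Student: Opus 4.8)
The plan is to reduce everything to Proposition~\ref{3b7}, which was stated precisely for increasing sequences of this kind. First I would apply the equivalence of conditions (a) and (b) in Proposition~\ref{3b7} to the given sequence $ a_1 \le a_2 \le \dots $. The hypothesis $ \lim_n h(a_n) = 1 $ is exactly condition (a), so condition (b) holds: for almost every $ s $ there exists $ n $ with $ F(s) \subset \Cl(a_n) $. Concretely, this produces a conull set $ S' \subset S $ such that each $ s \in S' $ is captured by some $ \Cl(a_n) $ (with $ n $ depending on $ s $).

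Next I would invoke two elementary facts. By construction $ F $ takes values in the closed subsets of $ X $, so $ F(s) $ is closed; and $ \Cl(a_n) $ is compact by assumption. A closed subset of a compact set is compact (valid in an arbitrary topological space, with no separation axiom required). Hence for each $ s \in S' $ the set $ F(s) $, being a closed subset of the compact set $ \Cl(a_n) $, is compact. Since $ S' $ is conull, $ F(s) $ is compact for almost all $ s $, as claimed.

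There is essentially no obstacle here: the substantive work was already carried out in Proposition~\ref{3b7}, and the remaining content is the standard topological observation that closed subsets of compact sets are compact. The only point worth verifying is that the single ``for almost every $ s $'' quantifier in Proposition~\ref{3b7}(b) yields one conull set on which the compactness conclusion holds, rather than a separate null exceptional set for each $ n $; but this is exactly how (b) is phrased, so no further bookkeeping is needed.
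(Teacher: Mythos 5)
Your proof is correct and is exactly the argument the paper intends: the corollary is stated without proof precisely because it follows from Proposition~\ref{3b7} plus the observation that $F(s)$, being a closed subset of the compact set $\Cl(a_n)$, is compact. Your care about the quantifier (one conull set from \ref{3b7}(b), with $n$ depending on $s$) and about needing no separation axiom is apt; nothing is missing.
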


Given $ r \in \Reg(X) $, $ r \notin A $, we may try to extend $ h $ to
$ r $ by approximation from the inside:
\[
h_-(r) = \sup \{ h(a) : a \in A, \, \Cl(a) \subset \Int(r) \} \, .
\]
Then $ h_-(r) \wedge h_-(r') = 0 $, but the question is,
whether $ h_-(r) \vee h_-(r') = 1 $ or not.

Denote $ \Bd(r) = \Cl(r) \setminus \Int(r) $ (the boundary).

\begin{proposition}\label{3b9}
(a) If $ h_-(r) \vee h_-(r') = 1 $ then $ F(s) \cap \Bd(r)
= \emptyset $ for almost all $ s $.

(b) If $ F(s) \cap \Bd(r) = \emptyset $ for almost all $ s $, and
$ F(s) $ is compact for almost all $ s $, and $ X $ is a regular
topological space, then $ h_-(r) \vee h_-(r') = 1 $ (and therefore $
h_-(r) $ belongs to the noise-type completion of $ B $, and $ \(
h_-(r) \)' = h_-(r') $).
\end{proposition}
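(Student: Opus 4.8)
The plan is to reduce both parts to a single description of the spectral set of the join $ h_-(r) \vee h_-(r') $. Write $ D_r = \{ a \in A : \Cl(a) \subset \Int(r) \} $ and $ D_{r'} = \{ a \in A : \Cl(a) \subset \Int(r') \} $; both are upward directed (if $ \Cl(a),\Cl(b) \subset \Int(r) $ then $ \Cl(a\vee b) = \Cl(a)\cup\Cl(b) \subset \Int(r) $), and $ h_-(r) = \sup_{a\in D_r} h(a) $, $ h_-(r') = \sup_{a\in D_{r'}} h(a) $. Using join-continuity of $ y \mapsto x\vee y $ (as in the proof of the first Proposition of Sect.~\ref{1a}) together with separability, $ h_-(r) \vee h_-(r') $ is the supremum of the directed family $ \{ h(a_1 \vee a_2) : a_1 \in D_r,\, a_2 \in D_{r'} \} $, and by monotone convergence for $ x \mapsto S_x $,
\[
S_{h_-(r)\vee h_-(r')} = \bigcup_{a_1\in D_r,\, a_2\in D_{r'}} S_{h(a_1\vee a_2)} \pmod 0 .
\]
The geometric fact I would use repeatedly is that for $ a_1\in D_r $, $ a_2\in D_{r'} $, by \eqref{3b1} and $ \Cl(a_1\vee a_2) = \Cl(a_1)\cup\Cl(a_2) $,
\[
S_{h(a_1\vee a_2)} = \{ s : F(s) \subset \Cl(a_1)\cup\Cl(a_2) \} , \qquad \Cl(a_1)\cup\Cl(a_2) \subset \Int(r)\cup\Int(r') = X\setminus\Bd(r) \pmod 0 .
\]

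For (a), let $ N = \{ s : F(s)\cap\Bd(r) \ne \emptyset \} $. By the display above, each $ S_{h(a_1\vee a_2)} $ ($ a_1\in D_r $, $ a_2\in D_{r'} $) is contained mod~0 in $ \{ s : F(s)\subset X\setminus\Bd(r)\} $, hence is disjoint mod~0 from $ N $. Therefore $ N $ is disjoint mod~0 from the (countable, by separability) union equal to $ S_{h_-(r)\vee h_-(r')} $. Since $ h_-(r)\vee h_-(r') = 1 $ gives $ S_{h_-(r)\vee h_-(r')} = S_1 = S $ mod~0, we get $ N = \emptyset $ mod~0, which is the assertion.

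For (b), I would prove the reverse inclusion $ S \subset S_{h_-(r)\vee h_-(r')} $ mod~0 by a pointwise covering argument. Fix $ s $ in the conull set where $ F(s) $ is compact and $ F(s)\cap\Bd(r)=\emptyset $; then $ F(s) \subset \Int(r)\cup\Int(r') $ (a disjoint union of opens), and $ K_1 = F(s)\cap\Cl(r) $, $ K_2 = F(s)\setminus\Int(r) $ are disjoint compacts with $ K_1\subset\Int(r) $, $ K_2\subset\Int(r') $. Because $ X $ is regular and $ A_0 $ (Assumption~\ref{3b2}) is a base, each point of $ K_1 $ lies in some $ \Int(a) $, $ a\in A_0 $, which regularity lets me shrink so that $ \Cl(a)=\Cl(\Int(a))\subset\Int(r) $; by compactness finitely many suffice, and their join $ a_1\in A $ (a finite join of $ A_0 $-elements) satisfies $ K_1\subset\Int(a_1) $ and $ \Cl(a_1)\subset\Int(r) $, i.e. $ a_1\in D_r $. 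Similarly I obtain $ a_2\in D_{r'} $, again a finite join of $ A_0 $-elements, with $ K_2\subset\Cl(a_2)\subset\Int(r') $. Then $ F(s)\subset\Cl(a_1)\cup\Cl(a_2) $, so $ s\in S_{h(a_1\vee a_2)}\subset S_{h_-(r)\vee h_-(r')} $. Since every pair $ (a_1,a_2) $ produced this way lies in the countable set of finite joins of $ A_0 $-elements, only countably many relations \eqref{3b1} and inclusions $ S_{h(a_1\vee a_2)}\subset S_{h_-(r)\vee h_-(r')} $ are involved; fixing common representatives on a single conull set upgrades the above to a genuine pointwise containment of a conull set in $ S_{h_-(r)\vee h_-(r')} $. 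Hence $ S_{h_-(r)\vee h_-(r')}=S $ mod~0 and $ h_-(r)\vee h_-(r')=1 $. Combined with the always-valid $ h_-(r)\wedge h_-(r')=0 $, this exhibits $ h_-(r) $ as a monotone limit in $ \Cl(B) $ possessing a complement, which by the characterization of the noise-type completion in \cite{Ts} places $ h_-(r) $ in the completion with $ (h_-(r))'=h_-(r') $.

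The routine but delicate point is the dictionary between directed suprema in $ B $ and the corresponding unions of spectral sets mod~0 (join-continuity, monotone convergence for $ x\mapsto S_x $, and the reduction of uncountable suprema to countable ones via separability); once this is set up, part (a) is immediate. I expect the main obstacle to be part (b): producing the two compactly-contained elements $ a_1\in D_r $, $ a_2\in D_{r'} $ out of $ A_0 $. This is exactly where all three hypotheses are needed — compactness of $ F(s) $ to reduce to finite covers, regularity of $ X $ to shrink base neighbourhoods so that $ \Cl(a)\subset\Int(r) $, and the countable base to keep $ a_1,a_2 $ inside a fixed countable family so that the mod~0 bookkeeping closes up.
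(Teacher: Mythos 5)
Your proposal is correct and takes essentially the same route as the paper: both rest on expressing $ h_-(r) \vee h_-(r') $ as a monotone supremum of elements $ h(a) $ with $ \Cl(a) \cap \Bd(r) = \emptyset $ (via $ \Cl(a_1 \vee a_2) = \Cl(a_1) \cup \Cl(a_2) \subset \Int(r) \cup \Int(r') = X \setminus \Bd(r) $), and, for part (b), on covering the compact set $ F(s) $ by finitely many base elements from $ A_0 $ shrunk by regularity so that their closures avoid $ \Bd(r) $. The differences are only organizational: the paper constructs a single global increasing sequence $ b_n $ (independent of $ s $) and feeds it twice into Proposition \ref{3b7}, whereas you run the covering argument per $ s $, split $ F(s) $ into $ K_1, K_2 $, and re-derive the monotone-supremum/spectral-set dictionary with explicit countability bookkeeping --- a correct, slightly more laborious rendering of the same argument (your appeal to join-continuity on the completion is not quite the right citation, since $ h_-(r) $ is not yet known to lie in $ C $, but the needed identity of directed suprema holds outright in the lattice of \sif s).
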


\begin{proof}
We have $ h_-(r) \vee h_-(r') = \sup \{ h(a) : a \in A, \, \Cl(a)
\cap \Bd(r) = \emptyset \} = \sup h(a_n) $ for some $ a_n \in A
$ satisfying $ \Cl(a_n) \cap \Bd(r) = \emptyset $ and $ a_1 \le
a_2 \le \dots \, $ Thus, Item (a) follows from \ref{3b7}. We turn to
Item (b). Using Assumption \ref{3b2} and regularity of $ X $ we take $
a_1, a_2, \dots \in A $ such that $ \Cl(a_n) \cap \Bd(r) = \emptyset $
for all $ n $, and $ \cup_n \Int(a_n) = X \setminus \Bd(r)
$. Introducing $ b_n = a_1 \vee \dots \vee a_n $ we have
$ \Cl(b_n) \cap \Bd(r) = \emptyset $ and by \eqref{3a8},
$ \cup_n \Int(b_n) = X \setminus \Bd(r) $. Compactness of $ F(s) $
implies $ F(s) \subset \Int(b_n) $ for some $ n $ (dependent on $ s
$). By \ref{3b7}, $ h(b_n) \uparrow 1 $. On the other hand, $ h(b_n)
\le \sup_k h(a_k) = h_-(r) \vee h_-(r') $.
\end{proof}

\bigskip
\filbreak
{
\small
\begin{sc}
\parindent=0pt\baselineskip=12pt
\parbox{4in}{
Boris Tsirelson\\
School of Mathematics\\
Tel Aviv University\\
Tel Aviv 69978, Israel
\smallskip
\par\quad\href{mailto:tsirel@post.tau.ac.il}{\tt
 mailto:tsirel@post.tau.ac.il}
\par\quad\href{http://www.tau.ac.il/~tsirel/}{\tt
 http://www.tau.ac.il/\textasciitilde tsirel/}
}

\end{sc}
}
\filbreak

\end{document}